\newtheorem{thm}{Theorem}
\newtheorem{lemma}{Lemma}
\newtheorem{definition}{Definition}	
\newtheorem{cor}{Corollary}
\newtheorem{exam}{Example}
\def\E{{\mathcal{ E }}}
\def\F{{\mathcal{ F }}}
\def\B{{\mathcal{ B }}}
\def\I{{\mathcal{ I }}}
\def\A{{\mathcal{ A }}}
\def\C{{\mathcal{ C }}}
\def\M{{\mathcal{ M }}}
\def\P{{\mathcal{ P }}}
\def\K{{\mathcal{ K }}}
\def\k{{\kappa}}
\def\t{{\theta}}
\def\l{{\lambda}}
\def\fin{{\rm{ fin }}}
\def\ult{{\rm{ Ult }}}
\def\aut{{\rm{ Aut }}}
\def\lo{{\rm{ LO }}}
\def\no{{\rm{ NO }}}
\def\na{{\rm{ NA }}}
\title{Ramsey property for Boolean algebras with ideals and $\P(\omega_1)/\fin$}
\author{Dana Barto\v{s}ov\'a}
\address{Department of Mathematics\\
University of Toronto\\
Bahen Center\\
40 St. George St.\\
Toronto\\
Ontario\\
Canada\\
M5S 2E4}
\email{dana.bartosova@utoronto.ca}
\begin{document}

\maketitle

	\textbf{Abstract.} We apply the Dual Ramsey Theorem of Graham and Rothschild to prove the Ramsey property for classes of finite Boolean algebras with distinguished ideals. This allows us to compute the universal minimal flow of the group of automorphisms of $\P(\omega_1)/\fin,$ should it be isomorphic to $\P(\omega)/\fin$ or not. Taking Fra\"iss\'e limits of these classes, we can compute universal minimal flows of groups of homeomorphisms of the Cantor set fixing some closed subsets.

\section{Introduction}

The Boolean algebra $\P(\omega)/\fin$ and its Stone space $\omega^*=\beta\omega\setminus \omega$ are objects in set theory and set-theoretic topology of high interest. It was a big surprise that the question whether $\P(\omega)/\fin$ can be isomorphic to its higher analogue $\P(\omega_1)/\fin$ is not easy to settle (here $\omega_1$ is the order type of the first uncountable cardinal).  This question was first asked in 1970's at a seminar in Katowice and therefore is known as Katowice Problem. It is easily seen to consistently have a negative solution, e.g. under the Continuum Hypothesis. The first reaction of most is that such an isomorphism cannot exist and it has been studied by many great mathematicians. Balcar and Frankiewicz (\cite{BFr}) showed that this problem is really specific to $\omega$ and $\omega_1,$ in particular $\P(\kappa)/\fin$ cannot be isomorphic to $\P(\l)/\fin$ for any  pair of cardinals $\k, \l$ other than $\omega, \omega_1.$  A natural approach is to assume existence of an isomorphism between $\P(\omega)/\fin$ and $\P(\omega_1)/\fin$ and to study consistency of its consequences. See for instance works of Juris Stepr\={a}ns (\cite{JS}) or David Chodounsk\'y (\cite{Ch}). Recently, Klass Pieter Hart proved that another consequence is existence of an automorphism of $\P(\omega)/\fin$ which is not induced by an almost permutation. This was our original motivation to study dynamics of groups of automorphisms of $\P(\omega)/\fin$ and $\P(\omega_1)/\fin.$

A connection between dynamics of groups of automorphisms of structures and finite Ramsey theory was first observed by Pestov in \cite{P2}, who showed that the group $G$ of automorphisms of $(\mathbb{Q},<)$ is extremely amenable, i.e. every action of $G$ on a compact Hausdorff space has a fixed point. The first example of an extremely amenable group was constructed by Herer and Christensen (\cite{HCh}) in the setting of pathological submeasures answering the question of Mitchell whether such groups exist (\cite{M}). Generalizing Pestov's result, Kechris, Pestov and Todor\v{c}evi\'c showed in \cite{KPT} that groups of automorphisms of numerous structures are examples of extremely amenable groups. They developed general framework connecting topological dynamics, Fra\"iss\'e classes and the Ramsey property to give explicit descriptions of universal minimal flows of groups of automorphisms of structures. Their work started a new vital branch of research pursued by for example Lionel Nguyen van Th\'e (\cite{vT}, \cite{vT2}), Miodrag Soki\'{c}({\cite{MS}}), Julien Melleray and Todor Tsankov (\cite{MT}). Moreover, new connections were found, e.g. with ergodic theory (\cite{Ke}, \cite{AKL}).

In \cite{KPT}, the authors identified universal minimal flows with subspaces of spaces of linear orderings. However, as they note at the end of their paper, there is nothing specific to linear orderings in this theory, and other relations serve the same purpose for other classes of structures not considered in the paper. This generalization was carried out by Lionel Nguyen van Th\'e (\cite{vT2}), where the author proves results analogous to those in \cite{KPT} for extensions by  at most countably many new relations. Working with extensions is necessary, because many classes of finite structures are not Ramsey classes, but they have finite Ramsey degrees and thus adding new relations produces a class with the Ramsey property. Therefore, computing Ramsey degrees is essential for dynamical applications, as well as for classification results (see \cite{Fo}, \cite{Fo2}, \cite{MS}, \cite{vT2}).

The paper is organized as follows. In the second section, we introduce basic objects under consideration. In the third section, we remind the reader of the connection between Fra\"iss\'e classes, the Ramsey property and the ordering property given in \cite{KPT}. In the fourth section, we prove the Ramsey property for classes of finite Boolean algebras with ideals. In the last section, we apply the Ramsey property to compute the universal minimal flow of the group of automorphisms of $\P(\omega_1)/\fin,$ quotients of other power set algebras and groups of homeomorphisms of the Cantor set fixing generic closed subsets.
 
\section{Preliminaries}

In this section, we introduce basic notions both from set theory and topological dynamics.

\subsection{Set theory}

\begin{definition}[(Prime) ideal]
Let $X$ be a set and let $\I$ be a collection of subsets of $X$. We say that $\I$ is an ideal on $X$ if it is downwards closed and closed under finite unions:
\begin{itemize}
 \item[(1)] If $A\in \I$ and $B\subset A$ then also $B\in \I,$
 \item[(2)] if $A,B\in \I$ then $A\cup B\in I.$
\end{itemize}
We call $\I$ a prime filter if it moreover satisfies a third condition:
\begin{itemize}
\item[(3)] If  $A, B\subset X$ and   $A\cup B\in\I$ then $A\in \I$ or $B\in \I.$
\end{itemize}

We say that an ideal is \emph{non-trivial}, if $X\notin \I.$ 
\end{definition}

In what follows, we only consider non-trivial ideals.

\begin{exam}\label{E}
The collection of all finite subsets of a set $X$ is an ideal on $X$. If $X$ is a measure space, then the sets of measure $0$ form an ideal. If $X$ is a topological space, then the collection of all meager subsets is an ideal.
\end{exam}

A dual notion to an ideal is the notion of a filter.

\begin{definition}[(Ultra)filter]
A collection $\F$ of subsets of a set $X$ is called a \emph{filter} if it is closed under supersets and finite intersections:
\begin{itemize}
\item[(1)] If $A\in \F$ and $A\subset B$ then also $B\in \F,$ 
\item[(2)] if $A,B\in \F$ then $A\cap B\in\F.$
\end{itemize}
We call $\F$ an \emph{utlrafilter} if it is a maximal filter with respect to inclusion. Equivalently if it moreover satisfies a third condition:
\begin{itemize}
\item[(3)] For every  $A\subset X$ either   $A\in\F$ or   $X\setminus A\in \F.$
\end{itemize}
A filter $\F$ is called \emph{non-trivial} if $\emptyset\notin \F.$
\end{definition}

In this paper, we only consider non-trivial filters.

It is easy to see that if $\I$ is an ideal then $\F=\{X\setminus A:A\in \I\}$ is a filter. We say that $\F$ is a \emph{dual filter} to the ideal $\I.$ 

\begin{exam}
Filters dual to the ideals in Example \ref{E} are the filter of cofinite sets, the filter of sets of positive measure and the filter of subsets of the second category.
\end{exam}

Let us recall the definition of the Boolean algebra $\P(\omega_1)/\fin$ and similar algebras. If $X$ is a set and $\P(X)$ denotes its power set, then $\P(X)$ with the operations of $\cup,\cap,$ complementation, $\emptyset$ and $X$ is a Boolean algebra. If $\l$ is at most the cardinality of $X$ then the set $\I$ of all subsets of $X$ of cardinality less than $\l$ is an ideal on $\P(X).$ The quotient algebra of $P(X)$ by the ideal $\I$ consists of equivalence classes $[E]=\{F\subset X: \Delta(E,F)<\l\}$ for $E\subset X$ and $\Delta(E,F)$ the symmetric difference of $E$ and $F.$ It is easy to see that the operations on $\P(X)$ preserve the equivalence classes.

Below we give an abstract definition of a Boolean algebra, ideal and a quotient algebra.

\begin{definition}[Boolean algebra]
$\B=(B,\vee,\wedge,0,1,\neg)$ is a Boolean algebra if $\vee,\wedge$ are binary operations called \emph{join} and \emph{meet} respectively, $\neg$ is a unary operation called complement and $0,1$ are constants such that for all $a,b,c\in \B$ the following axioms hold:
\begin{itemize}
\item $(a\vee b)\vee c=a\vee (b\vee c)$ and $(a\wedge b) \wedge c=a\wedge(b\wedge c)$ - associativity,
\item $a\vee b=b\vee a$ and $a\wedge b=b\wedge a$ - commutativity,
\item $a\vee 0=a$ and $a\wedge 1=a$ - identity,
\item $a\vee (b\wedge c)=(a\vee b)\wedge (a\vee c)$ and $a\wedge (b\vee c)=(a\wedge b)\vee (a\wedge  c)$ - distributivity,
\item $a\vee \neg a=1$ and $a\wedge \neg a=0$ - complements.
\end{itemize}
\end{definition}

There is a canonical partial ordering $<$ on a Boolean algebra $\B,$ given by $a<b$ if and only if $a\wedge b=a $ ( if and only if $a\vee b=b$.)

An \emph{atom} of a Boolean algebra $\B$ is a non-zero element $a\in \B$ such that $0$ is the only element $<$-below $a.$ A Boolean algebra is called \emph{atomless} if it does not have any atoms. Every finite Boolean algebra has atoms. There is exactly one countable atomless Boolean algebra up to an isomorphism - the algebra of all clopen subsets of the Cantor set.

An ideal on a Boolean algebra is defined similarly as an ideal on a set. An \emph{ideal} on a Boolean algebra $\B$ is a subset of $\B$ closed under finite meets and downwards closed under the canonical partial order.

If $\B$ is a Boolean algebra and $\I\subset \B$ an ideal on $\B,$ then $\B/\I$ denotes the \emph{quotient algebra} of $\B$ by $\I$ consisting of equivalence classes 
$$
[a]=\{b\in \B: (a\wedge \neg b)\vee (\neg a\wedge b)\in \I\}
$$
for $a\in \B.$ The operations on $\B/\I$ are inherited from $\B.$ Note that $(a\wedge \neg b)\vee (\neg a\wedge b)$ corresponds to the symmetric difference if $\B$ is a power set algebra.

The category of Boolean algebras with homomorphisms is equivalent to the category of compact Hausdorff zero-dimensional spaces via the Stone duality.

\subsection*{Stone representation theorem}\label{Stone}
In 1936, M.H. Stone \cite{MHS} proved that every Boolean algebra $\mathcal{B}$ is isomorphic to the Boolean algebra of all clopen subsets of a compact totally disconnected Hausdorff space $\ult(\mathcal{B})$. The points of $\ult(\mathcal{B})$ are all ultrafilters on $\mathcal{B}$ and the sets $A^*=\{u\in \ult(\mathcal{B}): A\in u\}$ for $A\in \mathcal{B}$ form a clopen base of the topology on $\ult(\mathcal{B}).$ This gives a one-to-one correspondence that also extends to homomorphisms: If $f:\mathcal{B}\to\mathcal{C}$ is a homomorphism between two Boolean algebras, then $\ult(f):\ult(\mathcal{C})\to \ult(\mathcal{B})$ given by $$u\mapsto \mbox{``the  ultrafilter on } \mathcal{B} \mbox{ generated by } \bigcup\{f^{-1}(A):A\in u\}\mbox{''}$$ is a continuous map. If $f$ is injective, then $\ult(f)$ is surjective. 
In terms of category theory, $\ult$ is a contravariant functor giving an equivalence between the category of Boolean algebras with homomorphisms and the category of compact totally disconnected Hausdorff spaces with continuous mappings.

\begin{exam}

The \v{C}ech-Stone compactification $\beta \omega$ of discrete $\omega$ is the Stone space of $\P(\omega).$ As we noted in the Introduction, $\omega^*=\beta\omega\setminus\omega$ is the Stone representation of the Boolean algebra $\P(\omega)/\fin.$ The Stone space of the countable atomless Boolean algebra is the Cantor space.
\end{exam}

\subsection{Topological dynamics}

The central notion of topological dynamics is a continuous action $\pi:G\times X\to X$ of a topological group $G$ on a compact Hausdorff space $X.$ We call $X$ a \emph{$G$-flow} and omit $\pi$ if the action is understood and write $gx$ instead of $\pi(g,x).$ A \emph{homomorphism} of $G$-flows $X$ and $Y$ is a continuous map $\phi:X\to Y$ respecting the actions of $G$ on $X$ and $Y$, i.e. $\phi(gx)=g\phi(x)$ for every $g\in G,$ $x\in X$ and $y\in Y.$ We say that $Y$ is a \emph{factor} of $X,$ if there is a homomorphism from $X$ onto $Y.$ Every $G$-flow has a minimal subflow, a minimal closed subspace of $X$ invariant under the action of $G.$ Among all minimal $G$-flows, there is a maximal one - the \emph{universal minimal flow} $M(G).$ It means that every other minimal $G$-flow is a factor of $M(G).$ For introduction to topological dynamics see \cite{dV}.

\subsection*{Automorphism groups}
Let $\B$ be a Boolean algebra and let $G$ be the group of automorphisms of $\B$. In what follows, we consider $G$ as a topological group with the topology of pointwise convergence. The topology is given by a basis of neighbourhoods of the neutral element consisting of open subgroups 
$$G_A=\{g\in G:g(a)=a, a\in A\}
$$
 where $A$ is a finite subalgebra of $\B.$ We can observe that a subset  $H$ of $G$ is closed if and only if it contains every $g\in G$ such that for any finite subalgebra $A$ of $\B$ there exists an $h\in H$ with $g|A=h|A.$

If every partial isomorphism between two finite subalgebras of $\B$ can be extended to an automorphism of the whole algebra $\B,$ then we say that $\B$ is \emph{homogeneous}. Equivalently, a Boolean algebra $\mathcal{B}$ is homogeneous if for every $b\in\mathcal{B}$ the relative Boolean algebra $\mathcal{B}|b=\{c\in\mathcal{B}:c\leq b\}$ is isomorphic to $\mathcal{B}.$ 

\subsection*{Linear orderings}

We will consider the space $\lo(\B)$ of all linear orderings of a Boolean algebra as a subspace of the compact space $2^{\B\times\B}$ with the product topology. It is easy to see that $\lo(\B)$ is closed in $2^{\B\times\B}$ and therefore compact. The topology on $\lo(\B)$ is generated by basic open sets of the form
$$
(A,\prec)=\{<\in\lo(\B): \ <|A=\prec\},
$$
where $A$ is a finite subalgebra of $\B$ and $\prec$ is a linear ordering on $\B.$

If $G$ is the group of automorphisms of $\B,$ we let $G$ act on $\lo(\B)$ by
$$
a (g<) b \rm{\ if \ and \ only \ if \ } g^{-1}a<g^{-1}b,
$$
for any $g\in G,$ $<\in\lo(\B)$ and $a,b\in \B.$

\section{Fra\"iss\'e classes, the Ramsey and the ordering properties}

In this section, we introduce the main ingredients of the theory developed by Kechris, Pestov and Todor\v{c}evi\'c in \cite{KPT}.

The first ingredient are  Fra\"iss\'e classes. The original definition includes classes of finitely-generated structures. For our purpose it is however enough to introduce the definition for classes of finite structures.

\begin{definition}[Fra\"iss\'e class]
A class of finite structures $\mathcal{K}$ of a given language is called a \emph{Fra\"iss\'e class} if it is countable, contains structures of arbitrary finite cardinality and it satisfies the following conditions:
\begin{itemize}
\item[(HD)] Hereditary property:  if $A$ is a finite substructure of $B$ and $B\in\mathcal{K},$ then also $A\in \mathcal{K}.$
\item[(JEP)] Joint embedding property: if $A,B\in \mathcal{K}$ then there exists a $C\in\mathcal{K}$ in which both $A$ and $B$ embed.
\item[(AP)] Amalgamation property: if $A,B,C\in\mathcal{K}$ and $i:A\to B$ and $j:A\to C$ are embeddings, then there exist $D\in\mathcal{K}$ and embeddings $k:B\to D$ and $l:C\to D$ such that $k\circ i=l\circ j.$
\end{itemize}
\end{definition}

If $\A$ is a countable locally-finite (finitely-generated substructures are finite) homogeneous structure then the class of all finite substructures of $\A$ up to an isomorphism form a Fra\"iss\'e class. Conversely, if $\K$ is a Fra\"iss\'e class then there is a unique countable locally-finite homogeneous structure $\A$ (up to an isomorphism) such that $\K$ is the class of all finite substructures of $A$ up to an isomorphism. We call $\A$ a \emph{ Fra\"iss\'e  limit} of $\K.$ This correspondence was first described by Fra\"iss\'e in \cite{F}.

Fra\"iss\'e  limits of Fra\"iss\'e  classes are exactly countable locally-finite homogeneous structures. They are unique up to an isomorphism by $\omega$-categoricity. 

\begin{exam}
The countable atomless Boolean algebra, the random graph, the random hypergraph, $(\mathbb{Q},<)$ are Fra\"iss\'e  limits of the class of finite Boolean algebras, finite graphs, finite hypergraphs, finite linear orderings respectively.
\end{exam}

\medskip

In 1928, Ramsey proved the following combinatorial property to classify binary relations. It is now known as Ramsey's theorem and it was a building stone for nowadays fast developing Ramsey theory.

\begin{thm}[Ramsey's Theorem]
For every $k<n\in \mathbb{N}$ and $l\in\mathbb{N},$ there exists $N\in\mathbb{N}$ such that for every colouring of $k$-element subset of $N$ by $l$-many colours there is an $n$-element subset $X$ of $N$ such that all $k$-element subsets of $X$ have the same colour.
\end{thm}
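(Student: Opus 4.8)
The plan is to deduce the finite statement from its infinitary counterpart by a compactness argument, rather than to grind through the direct finite double induction. Concretely, I would first establish the \emph{infinite} Ramsey theorem: for every $k,l\in\mathbb{N}$ and every colouring $c\colon[\omega]^k\to l$ of the $k$-element subsets of $\omega$ by $l$ colours, there is an infinite set $H\subseteq\omega$ such that all of $[H]^k$ receive a single colour. This is the natural object to prove by induction on $k$, and the finite version then falls out automatically.

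For the infinite theorem I would induct on $k$. The base case $k=1$ is just the infinitary pigeonhole principle: partitioning an infinite set into finitely many pieces leaves one piece infinite. For the inductive step, suppose the result holds for $k-1$. Given $c\colon[\omega]^k\to l$, I would build a decreasing sequence of infinite sets together with distinguished points $a_0<a_1<\cdots$ as follows. Having chosen $a_i$ and an infinite set $M_i$ with $a_i\in M_i$, consider the \emph{link} colouring $c_i\colon[M_i\setminus\{a_i\}]^{k-1}\to l$ defined by $c_i(s)=c(\{a_i\}\cup s)$; by the induction hypothesis there is an infinite $M_{i+1}\subseteq M_i\setminus\{a_i\}$ homogeneous for $c_i$, and I let $a_{i+1}=\min M_{i+1}$. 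Each $a_i$ thereby acquires a well-defined colour, namely the constant $c_i$-value on $(k-1)$-subsets of $\{a_{i+1},a_{i+2},\dots\}$; a final application of the pigeonhole principle picks an infinite subset of $\{a_0,a_1,\dots\}$ on which this colour is constant, and that subset is homogeneous for $c$.

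To pass from the infinite to the finite statement I would argue by contradiction using K\"onig's lemma. Fix $k<n$ and $l$, and suppose that for every $N$ there is a \emph{bad} colouring $c_N\colon[N]^k\to l$ admitting no $n$-element homogeneous set. The bad colourings organise into a finitely branching tree under restriction (a node at level $N$ is a bad colouring of $[N]^k$, and its predecessors are its restrictions to smaller initial segments), which is infinite because bad colourings exist at every level. K\"onig's lemma then yields an infinite branch, i.e. a colouring $c\colon[\omega]^k\to l$ all of whose finite restrictions are bad. But the infinite Ramsey theorem gives an infinite $c$-homogeneous set, any $n$ elements of which form an $n$-element homogeneous set inside some finite restriction, contradicting badness. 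Hence some finite $N$ works.

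The main obstacle is the inductive step of the infinite theorem: the bookkeeping that simultaneously thins the ground set and records a colour for each distinguished point, so that the terminal pigeonhole applies. The compactness transfer is routine once K\"onig's lemma is in hand, and the base case is immediate; everything of substance lies in organising the diagonal construction of the $a_i$ and $M_i$ correctly.
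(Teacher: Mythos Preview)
Your argument is correct and follows one of the standard routes to finite Ramsey: prove the infinite version by induction on $k$ via the diagonal thinning construction, then descend to the finite statement by a K\"onig's lemma compactness argument. The bookkeeping you describe for the inductive step is right, and the tree of bad colourings is indeed finitely branching with nodes at every level (restrictions of bad colourings stay bad), so the transfer goes through.

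That said, there is nothing to compare against: the paper does not prove Ramsey's Theorem. It is quoted as a classical 1928 result, purely as historical and motivational background for the structural Ramsey property that the paper actually works with. The paper's own technical contribution relies instead on the Dual Ramsey Theorem of Graham and Rothschild (Theorem~\ref{GR}), also quoted without proof, which plays the role of the base case in the inductions of Theorems~\ref{RP1} and~\ref{RP2}. So your proof is a welcome supplement rather than a reconstruction of anything in the text.
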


In the 1970's and 1980's the Ramsey property for structures was first studied.

\begin{definition}[Ramsey property]
 A class $\mathcal{K}$ of finite structures satisfies the \emph{Ramsey property} if for every $A\leq B\in\mathcal{K}$ and  a natural number $k\geq 2$ there exists $C\in\mathcal{K}$ such that 
 $$
 C\to (B)^A_k,
 $$
 i.e. for every colouring of copies of $A$ in $C$ by $k$ colours, there is a copy $B'$ of $B$ in $C,$ such that all copies of $A$ in $B'$ have the same colour $c_{B'}$. We then say that $B'$ is \emph{monochromatic} in the colour $c_{B'}.$
\end{definition}

The classes of finite linearly ordered ($K_n$-free) graphs satisfy the Ramsey property by \cite{NR1} and \cite{NR2}, the class of finite posets with linear orderings extending the partial order satisfy the Ramsey property by \cite{MS}. 

In the last section, we prove the Ramsey property for classes of finite Boolean algebras with unary predicates represented as ideals. We rely on the corresponding result for the class of finite Boolean algebras, today known as the Dual Ramsey Theorem.

\begin{thm}[Dual Ramsey Theorem]\cite{GR}\label{GR}
The class of finite Boolean algebras satisfies the Ramsey property.
\end{thm}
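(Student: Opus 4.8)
The plan is to reduce the statement to a purely combinatorial partition theorem by passing through the Stone duality recalled above, and then to prove that combinatorial theorem, the Graham--Rothschild theorem, by bootstrapping from the Hales--Jewett theorem.

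First I would set up the dual translation. A finite Boolean algebra is determined up to isomorphism by its finite set of atoms, so I identify a finite algebra with $a$ atoms with the power set algebra $\P(\{1,\dots,a\})$. Since the ultrafilters on a finite algebra are exactly its atoms, $\ult(B)=\at(B)$; and by the contravariance of $\ult$, together with the fact recalled above that $\ult(f)$ is surjective whenever $f$ is injective, an embedding $A\hookrightarrow B$ of finite algebras corresponds to a dual surjection $\at(B)\twoheadrightarrow\at(A)$. Hence a \emph{copy} of $A$ inside $B$, i.e.\ a subalgebra isomorphic to $A$, corresponds to a partition of $\at(B)$ into $\card(\at(A))$ nonempty blocks, and the nesting of copies corresponds to coarsening of partitions, equivalently to composition of the associated surjections $\at(C)\twoheadrightarrow\at(B)\twoheadrightarrow\at(A)$. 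Under this dictionary the relation $C\to(B)^A_k$ becomes the assertion that every $k$-colouring of the partitions of $\at(C)$ into $\card(\at(A))$ blocks is constant on the set of those partitions that coarsen some fixed partition of $\at(C)$ into $\card(\at(B))$ blocks.

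To feed this to Hales--Jewett I would encode each partition canonically as a \emph{rigid surjection}: fixing the order on $\at(C)=\{1,\dots,c\}$, list the blocks by increasing least element, so that a partition into $a$ blocks becomes the unique surjection $f\colon[c]\twoheadrightarrow[a]$ with $\min f^{-1}(1)<\dots<\min f^{-1}(a)$. In this language the target is exactly the Graham--Rothschild theorem, of which the partition statement is the empty-alphabet case: for all $a\le b$ and all $k$ there is $c$ such that any $k$-colouring of the rigid surjections $[c]\twoheadrightarrow[a]$ is constant on the family of those that factor through a suitable fixed rigid surjection $[c]\twoheadrightarrow[b]$.

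I would prove this by the standard bootstrap from the Hales--Jewett theorem, which serves both as the base case and as the engine of an induction on the number of blocks: to stabilise a colouring of the $a$-block rigid surjections one recolours each point of a much larger parameter space by the full list of colours that its lower-dimensional subspaces receive, applies the inductive hypothesis --- over an enlarged alphabet encoding these lists --- to find a large subspace monochromatic for this refined colouring, and then invokes Hales--Jewett once more to absorb the last block; composing the combinatorial subspaces obtained at the successive stages produces the single coarsening $[c]\twoheadrightarrow[b]$ witnessing the conclusion. The main obstacle, and where essentially all the bookkeeping lives, is this induction: one must enlarge the alphabet correctly to convert colourings of subspaces into colourings of points, choose the intermediate lengths so that the subspaces returned at successive stages splice into a single ordered partition of $[c]$, and verify that the ``list blocks by least element'' normalisation is preserved under composition, so that rigidity --- hence the exact correspondence with subalgebras --- is not lost. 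Once the combinatorial theorem is in hand, translating back through $\ult$ yields the Ramsey property for finite Boolean algebras.
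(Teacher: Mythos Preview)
The paper does not prove this theorem at all: it is stated with the citation \cite{GR} and used throughout Section~4 as a black box. So there is no ``paper's own proof'' to compare against; your proposal goes well beyond what the paper itself supplies.

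That said, your outline is the standard and correct route. The Stone-duality translation is exactly right: subalgebras of a finite Boolean algebra $B$ with $a$ atoms correspond to partitions of $\at(B)$ into $a$ nonempty blocks, and the nesting of subalgebras corresponds to coarsening, so $C\to(B)^A_k$ becomes the partition statement you wrote down. Encoding partitions by rigid surjections and recognising the result as the empty-alphabet case of the Graham--Rothschild parameter-set theorem is also standard. The one place where your sketch is genuinely thin is the Hales--Jewett bootstrap: the phrase ``applies the inductive hypothesis over an enlarged alphabet encoding these lists'' and ``invokes Hales--Jewett once more to absorb the last block'' is gesturing at the right argument, but the actual induction (on $b-a$, or equivalently on the number of free parameters) requires care in how the intermediate parameter words are composed and in checking that rigidity is preserved, as you yourself flag. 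If you were to write this out in full you would essentially be reproducing the original Graham--Rothschild argument or one of its modern streamlinings (e.g.\ via Carlson--Simpson or the Pr\"omel--Voigt treatment). For the purposes of this paper, simply citing \cite{GR} is what the author does and is entirely appropriate.
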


In \cite{KPT}, the following property was found to be equivalent for  Fra\"iss\'e classes  to a space of linear orderings being a minimal flow for the group of automorphisms of their Fra\"iss\'e limits.

\begin{definition}[Ordering property]
Let $L\supset \{<\}$ be a signature and let $L_0=L\setminus\{<\}.$
Let $\K$ be a class in $L$ and let $\K_0$ denote the reduct of $\K$ to L. We say that $\K$ satisfies the \emph{ordering property}, if for every $A\in\K_0,$ there is $B\in K_0$ such that whenever $\prec$ is a linear ordering on $A,$ $\prec'$ is a linear ordering on $B$ and $(A,\prec),(B,\prec')\in\mathcal{K},$ then $(A,\prec)$ is a substructure of $(B,\prec').$ 
\end{definition}

Let $\K$ and $\K_0$ be as above. We call $\K$ \emph{order forgetful} if whenever $(A,<^A),(B,<^B)\in \K$ then $A\cong B$ if and only if $(A,<^A)\cong(B,<^B)$ 

It is easy to see that if $\K$ is order forgetful, then it satisfies the ordering property. Moreover, Proposition 5.6 in \cite{KPT} shows that $\K$ satisfies the Ramsey property if and only if $\K_0$ satisfies the Ramsey property.

The main results of Kechris, Pestov and Todor\v{c}evi\'c in \cite{KPT} are theorems 4.8 and 7.5. We combine excerpts of these theorems relevant to this paper in the following theorem.

\begin{thm}[\cite{KPT}]\label{T2}
Let $L\supset\{<\}$ be a signature, $L_0=L\setminus\{<\},$ $\K$ a reasonable Fra\"iss\'e class in $L$ with $<$ interpreted as a linear order,  and let $\K_0$ be the reduct of $\K$ to the language $L_0.$ Let $\mathbf{F}$ denote the Fra\"iss\'e limit of $\K,$ $\mathbf{F}_0$ the reduct of $\mathbf{F}$ to $L,$ and let $G, G_0$ be the automorphism groups of $\mathbf{F}, \mathbf{F}_0$ respectively. Let $X_{\K}=\overline{G<^{\mathbf{F}}}.$

If $\K$ satisfies the Ramsey property and the ordering property, then $G$ is extremely amenable and $X_{\K}$ is the universal minimal flow of $G_0.$
\end{thm}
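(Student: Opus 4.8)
The plan is to prove the two halves of the statement in turn, the second building on the first: first that the Ramsey property of $\K$ forces $G=\aut(\mathbf{F})$ to be extremely amenable, and then that the ordering property upgrades this to the identification of $X_\K$ with the universal minimal flow $M(G_0)$ of $G_0=\aut(\mathbf{F}_0)$. Two structural features will be used repeatedly. First, the presence of $<$ in $L$ makes every $A\in\K$ rigid, so that for finite $A\le B$ the embeddings and the copies of $A$ in $B$ coincide; this is exactly what lets me translate between colourings of group elements and colourings of substructures, and it is why one gets extreme amenability of the full ordered group $G$. Second, the hypothesis that $\K$ is \emph{reasonable} guarantees that $\mathbf{F}_0$ is the Fra\"iss\'e limit of $\K_0$, that $G$ is precisely the stabiliser in $G_0$ of the distinguished ordering $<^{\mathbf{F}}$, and that the finite restrictions of $<^{\mathbf{F}}$ realise all admissible orderings of finite substructures.

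For extreme amenability I would first reduce it to a finite combinatorial statement. Since $G$ carries the topology of pointwise convergence, a bounded right--uniformly continuous function on $G$ is, up to arbitrarily small error, a finite colouring of the embeddings of some finite $A\le\mathbf{F}$, and by Pestov's combinatorial characterisation $G$ is extremely amenable exactly when for every such colouring $c$, every finite $F\subseteq G$ and every $\varepsilon>0$ there is $g\in G$ on which the colouring is almost constant along $\{g\varphi\restriction A:\varphi\in F\}$. The next step transfers the finite Ramsey property of $\K$ to $\mathbf{F}$: given $A\le B$ and $k$ colours, pick $C\in\K$ with $C\to(B)^A_k$, embed $C$ into $\mathbf{F}$, and for any $k$-colouring of the copies $\binom{\mathbf{F}}{A}$ extract a $B$-copy inside $C$ all of whose $A$-copies receive one colour. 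Finally I would gather the finitely many copies $\{\varphi(A):\varphi\in F\}$ into a single finite $B\le\mathbf{F}$, apply the transferred Ramsey statement to $c$ to obtain a monochromatic copy $B'$, and extend the isomorphism $B\to B'$ to an automorphism $g$ by ultrahomogeneity; then all the copies $g(\varphi(A))$ lie in the monochromatic $B'$, which is the required $g$. Rigidity is what keeps the embedding/copy bookkeeping exact here.

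For the second half, note first that $X_\K$ is a genuine $G_0$--flow: it is the closure in the compact space $\lo(\mathbf{F}_0)$ of the $G_0$--orbit of $<^{\mathbf{F}}$ (this is how $\overline{G<^{\mathbf{F}}}$ must be read, since $G$ itself fixes $<^{\mathbf{F}}$), hence closed and $G_0$--invariant. Minimality is where the ordering property enters. Given $\prec\in X_\K$ and a finite $A\le\mathbf{F}_0$, I must produce $g\in G_0$ with $(g\prec)\restriction A=\,<^{\mathbf{F}}\restriction A$. Every finite restriction of $\prec$ is an admissible ordering, because $\prec$ lies in the orbit closure of $<^{\mathbf{F}}$; so I would choose $B\ge A$ as in the ordering property, realise it as a finite substructure of $\mathbf{F}_0$, and use that the admissible ordering $(A,<^{\mathbf{F}}\restriction A)$ embeds into $(B,\prec\restriction B)$. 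Pulling the resulting copy back by ultrahomogeneity gives the desired $g$, so $<^{\mathbf{F}}\in\overline{G_0\prec}$ for every $\prec$; as $\overline{G_0<^{\mathbf{F}}}=X_\K$, every orbit is dense and $X_\K$ is minimal.

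Universality is the step I expect to be the main obstacle. Let $Y$ be any minimal $G_0$--flow; viewing $Y$ as a $G$--flow and invoking the extreme amenability from the first half yields a $G$--fixed point $y_0\in Y$. Since $G$ is exactly the stabiliser of $<^{\mathbf{F}}$, the assignment $g<^{\mathbf{F}}\mapsto gy_0$ is a well defined $G_0$--equivariant map on the dense orbit. The delicate point is to show it is uniformly continuous and so extends to a continuous $G_0$--homomorphism $X_\K\to Y$; concretely one must reconcile the subspace uniformity that $X_\K$ inherits from $\lo(\mathbf{F}_0)$ with the uniformity of the homogeneous space $G_0/G$, so that agreement of $g<^{\mathbf{F}}$ and $h<^{\mathbf{F}}$ on a large finite substructure forces $gy_0$ and $hy_0$ to be close in $Y$. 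Granting the extension, its image is a nonempty closed $G_0$--invariant subset of the minimal flow $Y$, hence all of $Y$, so every minimal $G_0$--flow is a factor of $X_\K$. Because $X_\K$ is itself minimal, it satisfies the defining property of the universal minimal flow, and by uniqueness $X_\K\cong M(G_0)$.
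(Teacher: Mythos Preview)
The paper does not give its own proof of this theorem: it is quoted as a combination of Theorems~4.8 and~7.5 from \cite{KPT} and is used as a black box in the applications. So there is no proof in the paper to compare your proposal against.

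That said, your sketch is a faithful outline of the Kechris--Pestov--Todor\v{c}evi\'c argument. The three ingredients you isolate---(i) Ramsey property plus rigidity of ordered structures giving extreme amenability of $G$ via Pestov's finite oscillation stability criterion, (ii) the ordering property giving minimality of $X_\K$ as a $G_0$-flow, and (iii) the extreme amenability of $G$ producing a $G$-fixed point in any minimal $G_0$-flow, from which one builds a factor map---are exactly the steps in \cite{KPT}. You also correctly flag that $\overline{G<^{\mathbf{F}}}$ must be read as the $G_0$-orbit closure (a typo in the statement), and you identify the genuine technical point in universality: the well-defined map $g<^{\mathbf{F}}\mapsto gy_0$ on the orbit must be shown uniformly continuous for the subspace uniformity on $X_\K$ so that it extends to the closure. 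In \cite{KPT} this is handled by identifying the orbit $G_0\cdot<^{\mathbf{F}}$ with the homogeneous space $G_0/G$ and checking that the quotient uniformity agrees with the one inherited from $\lo(\mathbf{F}_0)$; your description of the obstacle is accurate, though in a full proof you would need to carry this out rather than grant it.
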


\section{Classes of finite Boolean algebras with ideals}

In this section, we  consider Fra\"iss\'e  classes of finite linearly ordered Boolean algebras with an increasing chain of  ideals and show that they satisfy the Ramsey and the ordering properties. Since the Fra\"iss\'e limit of the class of Boolean algebras is the countable atomless Boolean algebra whose Stone space is the Cantor set, this allows us to compute universal minimal flows of groups of homeomorphisms of the Cantor set fixing some closed sets (given by the generic ideals).

We also consider classes of isomorphism types of finite subalgebras of $\P(\k)/[\k]^{<\l}$ for any pair of cardinals $\l\leq \k.$ These cannot be countable and therefore not Fra\"iss\'e classes, however they satisfy all other requirements in the definition of the Fra\"iss\'e class and they admit an extension that satisfies the Ramsey and the ordering properties. This allows us to compute the universal minimal flow of groups of automorphisms of  $\P(\omega_1)/\fin.$

Let $L=\{\vee,\wedge,0,1,\neg\}$ be the language of Boolean algebras. Let $J$ be a linearly ordered set. We denote by $L_J$  the language $L$ expanded by $|J|$-many unary symbols $\left<P_{j}:j\in J\right>$, i.e. $L_{J}=\{\vee,\wedge,0,1,\neg, P_{j}:j\in J\}.$ To simplify the notation, if $A$ is a structures in the language $L_J$ and we often write $P_j$ in place of $P_j^A$ for every $j\in J.$

We will consider three classes of finite Boolean algebras in the language $L_{J}.$ 

\begin{definition}\label{classes}

\begin{itemize}

\noindent
\item[(1)] Let $\B_{J}$ denote the class of isomorphism types of finite Boolean algebras in the language $L_{J},$ where each $P_{j}$ for $j\in J$ is interpreted as an ideal and   $P_{i}\subset P_{j}$ for $i<j\in J.$ Moreover, to ensure the amalgamation property, we require that  every $A\in \B_J$ has at least one atom not in any $P_j$ for $  j\in J.$ 

\item[(2)] Let $\B_u$ denote the class of isomorphism types of finite Boolean algebras in the language $L_{1}$ with $P_0$ interpreted as an ideal such that every $A\in \B_J$ has exactly one atom not in $P_0.$ For 
every $A\in \B_u$ all atoms but one are in $P_0.$

\item[(3)] Let $\B_J^u$ denote the class of isomorphism types of finite Boolean algebras in the language $L_{J}$ where each $P_{j}$ for $j\in J$ is interpreted as an ideal and   $P_{i}\subset P_{j}$ for $i<j\in J.$ Moreover, for every $A\in \B_J^u$ all atoms but one are in one of $P_{j}.$ 

\end{itemize}
\end{definition}

If $J$ is countable then all the classes in Definition \ref{classes} are countable and it is easy to see that they are Fra\"iss\'e classes. If $J$ is uncountable then $\B_{J}$ and  $\B_J^u$  are uncountable as well and therefore not Fra\"iss\'e classes. However, they satisfy all the other conditions in the definition of a Fra\"iss\'e class. 

Therefore, if $J$ is countable, we can consider Fra\"iss\'e limits of all the classes in Definition \ref{classes}. Their limits will be countable atomless Boolean algebras with each $P_j$ for $j\in J$ interpreted as an ideal. We call inteprations of the $P_{j}$'s for $j\in J$ in the limit  \emph{generic ideals}. Since the Stone space of the countable atomless Boolean algebras is the Cantor set, dual filters to generic ideals correspond to generic closed subsets of the Cantor set. In the case of our classes, they will correspond to a decreasing chain of closed subsets, a point and a decreasing chain of closed subsets with exactly one point in the intersection.

We will show that we can equip structures in each class in Definition \ref{classes} with linear orderings and obtain a class of order forgetful structures satisfying the conditions in the definition of a Fra\"iss\'e class. We alter the definition of natural orderings in \cite{KPT} respecting the enumeration of the ideals. 

We first specify which orderings of atoms we allow. Let $A$ be a finite Boolean algebra in the language $L_J.$ We call an ordering $<$ of atoms of $A$ \emph{proper} if for every  two atoms $a,b\in A$ and $i<j$  if $a\in P_i$ and $b\in P_j\setminus P_i$ or $b\notin P_k$ for any $k\in J,$ then $a<b.$

\begin{definition}
Let $\B$ be one of $\B_J,\B_u, \B_J^u$ and let $A\in \B.$ We say that a linear ordering $<$ on $A$  is \emph{natural} if it is an antilexicographical extension of a proper ordering of atoms of $A.$  We denote by $\na(\B)$ the class of all naturally ordered algebras from $\B.$
\end{definition}

The class $\na(\B)$ is obviously order forgetful for any choice of $\B$, therefore it trivially satisfies the ordering property. Indeed, if $A\in \B_J$ and $<,<'$ are two natural orderings on $A$ then $(A,<)\cong (A,<').$ In other words, we can take $A$ in place of $B$ in the definition of the ordering property.

In \cite{KPT}, the authors proved that naturally ordered finite Boolean algebras (without predicates) form a Fra\"iss\'e class. Working along their proof, we show that the same is true for the class $\na(\B)$ (except for the condition of countability if $J$ is uncountable and $\B=\B_J$ or $\B=\B_J^u$).

\begin{lemma}
Let $\B$ be one  $\B_J, \B_u, \B_J^u$ and let $L_{\B}$ be the language of $\B.$  Then
$\na(\B)$ satisfies (HP), (JEP) and (AP) properties in the definition of a Fra\"iss\'e class.
\end{lemma}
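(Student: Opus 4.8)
The plan is to translate everything to the Stone dual side, where a finite Boolean algebra $A$ in the language $L_J$ corresponds to its finite set of atoms $\at(A)$, and each $P_j$ is recovered from the set of atoms it contains. Because $P_i\subseteq P_j$ for $i<j$, each atom $a$ carries a \emph{label} $\ell(a)=\{j\in J: a\in P_j\}$, which is an up-set of the \emph{linearly} ordered set $J$; hence the labels occurring in any fixed finite algebra are nested. I will use this nestedness constantly: a finite intersection of nonempty labels is again nonempty and equals the smallest label in the family. By the Stone duality recalled in the excerpt, a Boolean embedding $A\hookrightarrow B$ dualizes to a surjection $\pi\colon\at(B)\twoheadrightarrow\at(A)$, and the requirement that this be a substructure embedding (that each $P_j^A$ be the restriction of $P_j^B$) becomes the label-compatibility identity $\ell_A(a)=\bigcap_{b\in\pi^{-1}(a)}\ell_B(b)$ for each $a\in\at(A)$. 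For the ordered classes I additionally use the dictionary underlying the Dual Ramsey Theorem (\cite{GR}) and \cite{KPT}: natural orderings are exactly antilexicographic extensions of \emph{proper} orderings of atoms (orderings sorting atoms by label, smallest ideal first, with the label-free atoms last), and embeddings of naturally ordered algebras dualize to \emph{rigid} surjections of the ordered atom sets.

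For (HP), given a substructure $A\le B$ with $B\in\na(\B)$, the restricted predicates are downward closed and closed under joins, hence ideals, and the chain condition is inherited; the only point needing the linear order on $J$ is the count of label-free atoms. Since $\ell_A(a)=\bigcap_{b\mapsto a}\ell_B(b)$ and the labels are nested, $a$ is label-free exactly when some atom of $B$ above it is label-free, so the unique (resp.\ at least one) label-free atom of $B$ projects to witness the requirement for $A$, and similarly for the ``exactly one atom outside $P_0$'' clause of $\B_u$. That the induced order on $A$ is again natural follows as in \cite{KPT}: restricting the antilexicographic order to a subalgebra orders $\at(A)$ by the $<_B$-maxima of the fibers of $\pi$, and since $<_B$ is proper this max-of-fiber order is again proper. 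For (JEP) I observe that the two-element algebra $E=\{0,1\}$, whose single atom is label-free, lies in each of the three classes and embeds order-preservingly into every member as the trivial subalgebra, so (JEP) follows from (AP) by amalgamating over $E$.

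The core is (AP). Given order-embeddings $A\hookrightarrow B$ and $A\hookrightarrow C$, I dualize to rigid, label-compatible surjections $\at(B)\to\at(A)$ and $\at(C)\to\at(A)$ and build the amalgam on the fiber product $\at(D)=\at(B)\times_{\at(A)}\at(C)=\{(b,c):\pi_B(b)=\pi_C(c)\}$, assigning the label $\ell_D(b,c)=\ell_B(b)\cup\ell_C(c)$ (again an up-set, so the chain condition holds). Both projections are surjective, and the distributive identity $\bigcap_c(\ell_B(b)\cup\ell_C(c))=\ell_B(b)\cup\bigcap_c\ell_C(c)=\ell_B(b)\cup\ell_A(a)=\ell_B(b)$ (the last step because $\ell_A(a)\subseteq\ell_B(b)$) gives label-compatibility, so dually they yield predicate-preserving embeddings $B,C\hookrightarrow D$ commuting over $A$. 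Membership $D\in\B$ reduces again to counting label-free atoms: by nestedness a pair $(b,c)$ is label-free iff both coordinates are, and the label-free atom(s) of $B$ and of $C$ lie over the label-free atom(s) of $A$, so they pair up in the fiber product to produce exactly the atom(s) required by $\B_J$, $\B_u$ or $\B_J^u$.

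It remains to place a natural ordering on $D$ making both projections order-preserving, and this is the step I expect to be the main obstacle. I order $\at(D)$ first by label, which is forced by properness; note that $\ell_D(b,c)$ is the larger of the nested labels $\ell_B(b),\ell_C(c)$, so the rank of $(b,c)$ is the minimum of the ranks of $b$ and $c$, while the rank of $a\in\at(A)$ is the maximum of the ranks over its fiber. The residual freedom is to break ties \emph{within} each label-class so that both projections become rigid at once; here I follow the amalgamation of naturally ordered Boolean algebras in \cite{KPT}, the additional input being that the rank identities above force the $<_D$-minimum of each $B$-fiber (resp.\ $C$-fiber) into a label-class compatible with the proper order on $\at(B)$ (resp.\ $\at(C)$), so that the two rigidity constraints conflict neither with each other nor with properness. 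Verifying this compatibility carefully, rather than the fiber-product bookkeeping, is where the real work lies.
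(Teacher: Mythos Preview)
Your treatment of (HP) and (JEP) is correct and, modulo the Stone-dual vocabulary, matches the paper's: the key observation in both is that the label of an atom $a$ of $A$ equals the label of the $<_B$-largest atom of $B$ below it, which is exactly your intersection formula specialized to nested labels.

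For (AP) you take a genuinely different route. The paper (following \cite{KPT}) does \emph{not} use the fiber product. Over each atom $a_i$ of $A$ with $B$-fiber $b_{i1}<\cdots<b_{ij_i}$ and $C$-fiber $c_{i1}<\cdots<c_{il_i}$, the paper's amalgam $D$ has only $j_i+l_i-1$ atoms, obtained by taking disjoint copies $\overline{b_{ij}},\overline{c_{ij}}$ and identifying the two top atoms $\overline{b_{ij_i}}=\overline{c_{il_i}}$. The order $<_D$ is then built inductively on $i$, and the embeddings $r,s$ send each atom of $B$ (resp.\ $C$) to the join of an \emph{interval} of atoms of $D$. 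Because every fiber of $r$ and of $s$ is an interval, the max-of-fiber order is simply the right endpoint, so compatibility with $<_B,<_C$ and properness are immediate from the construction.

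Your fiber-product amalgam with labels $\ell_D(b,c)=\ell_B(b)\cup\ell_C(c)$ is perfectly good at the level of unordered structures, and your label-compatibility check is correct. The gap is precisely where you place it: producing a proper order on $\at(B)\times_{\at(A)}\at(C)$ for which both coordinate projections are simultaneously rigid. You propose to ``follow the amalgamation of naturally ordered Boolean algebras in \cite{KPT}'', but that amalgamation \emph{is} the gluing construction above, not the fiber product, so their ordering argument does not transfer; in the fiber product the $B$-fibers and $C$-fibers are interleaved rectangles rather than intervals, and one must argue directly that the properness constraint and the two max-of-fiber constraints are jointly consistent. (A minor related slip: for antilexicographic natural orders the relevant statistic is the \emph{maximum} of a fiber, not the minimum you mention.) This may well be doable, but it is an additional argument you have not supplied. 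The cleanest repair is to replace the fiber product by the paper's smaller glued amalgam, which is tailor-made so that the ordered verification becomes trivial.
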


\begin{proof}
We first show that $\na(\B)$ is hereditary. Let $B\in \B$ and let $A$ be its subalgebra. Suppose that $<$ is a natural ordering of $B$ given by an ordering of its atoms $b_1<b_2<\ldots<b_n.$ Let $a_1<a_2<\ldots<a_k$ be the atoms of $A.$ Write $a_i=b_{i1}\vee b_{i2}\vee\ldots\vee b_{ij_i},$ where $b_{i1},b_{i2},\ldots,b_{ij_i}$ are atoms of $B$ in $<$-increasing order. Then $a_i\in P_h$ for a $P_h\in L_{\B}$ if and only if $b_{ij_i}\in P_h.$ This holds because the ideals are ordered by inclusion  and $<$ respects this order while listing elements not in any $P_h$ as the last. Moreover, if $\B$ is either $\B_u$ or $\B_J^u,$ then $A$ has exactly one atom not in any $P_h$ since $B$ does.  Therefore $A\in \B.$

It remains to prove (AP), since (JEP) follows by amalgamation along the two element algebra. Let $(A,<_A),(B,<_B),(C,<_C)\in\na(\B)$ with atoms $a_1<_Aa_2<_A\ldots<_Aa_k,$ $b_1<_Bb_2<_B\ldots<_Bb_n,$ $c_1<_C c_2<_C\ldots<_Cc_m$ respectively. Suppose that there are embeddings $f: (A,<_A)\to (B,<_B)$ and $g:(A,<_A)\to (C,<_C).$ We look for $(D,<_D)$ and embeddings $r:(B,<_B)\to (D,<_D)$ and $s:(C,<_C) \to (D,<_D)$ such that $r\circ f=s\circ g.$

Write $f(a_i)=b_{i1}\vee b_{i2}\vee\ldots\vee b_{ij_i}$ with $b_{i1},b_{i2},\ldots,b_{ij_i}$ some of the atoms of $B$ in the increasing order and $g(a_i)=c_{i1}\vee c_{i2}\vee \ldots \vee c_{il_i}$ for $c_{i1},c_{i2},\ldots,c_{il_i}$ some of the atoms of $C$ in the increasing order. Notice that $a_i\in P_h$ if and only if $b_{ij_i},c_{il_i}\in P_h$ by the same argument as for (HP).

We let the atoms of $D$ to be $\{\overline{b_{ij}}\}_{1\leq i\leq k,1\leq j\leq j_i}$ and $\{\overline{c_{ij}}\}{1\leq i \leq k,1\leq j\leq l_i }$ all disjoint except for $\overline{b_{ij_i}}=\overline{c_{il_i}}$ for $1\leq i\leq k.$ We need to determine how  $<_D$ will behave on these atoms and where the atoms of $B,C$ will be send by $r,s$ respectively.

First consider $\{\overline{b_{1j}}\}_{1\leq j\leq j_1}\cup \{\overline{c_{1j}}\}_{1\leq j\leq l_1}$ and set 
$$
\overline{b_{11}}<_D \overline{b_{12}} <_D\ldots <_D \overline{b_{1j_i}}, \ \overline{c_{11}}<_D\overline{c_{12}} <_D \ldots <_D \overline{c_{1l_1}}=\overline{b_{1j_1}}
$$
and define $<_D$ to be arbitrary on the rest to be proper. 

For any linear ordering $<$, we denote $(-\infty,a]=\{x:x\leq a\}$ and $(a,b]=\{x:a<x\leq v=b\}.$ Then we let
$$
r(b_{11})=\bigvee (-\infty,\overline{b_{11}}], \  r(b_{12})=\bigvee (\overline{b_{11}},\overline{b_{12}}], \ldots, \ r(b_{1j_1})=\bigvee (\overline{b_{1j_1-1}},\overline{b_{1j_1}}] 
$$
$$
s(c_{11})=\bigvee (-\infty,\overline{c_{11}}],\ s(c_{12})=\bigvee (\overline{c_{11}},\overline{c_{12}}],\ldots, \ s(c_{1l_1})=\bigvee (\overline{c_{1l_1-1}},\overline{c_{1l_1}}]. 
$$

Then $b_{1t}\in P_h$ if and only if $r(b_{1t})\in P_h$  for every $t=1,\ldots,j_1$ and $h\in J,$ since $<$ on $\{\overline{b_{1j}}\}_{1\leq j\leq j_1}\cup \{\overline{c_{1j}}\}_{1\leq j\leq l_1}$ is proper. Similarly, $c_{1t}\in P_h$ if and only if $s(c_{1t})\in P_h$ for every $t=1,\ldots,l_1$ and $h\in J.$

Now we extend $<_D$ to  $\{\overline{b_{1j}}\}_{1\leq j\leq j_1}\cup \{\overline{c_{1j}}\}_{1\leq j\leq l_1} \cup \{\overline{b_{2j}}\}_{1\leq j\leq j_2}\cup \{\overline{c_2j}\}_{1\leq j\leq l_2}$ and require that the maps $b_{ij}\mapsto \overline{b_{ij}}$ and $c_{ij}\mapsto \overline{c_{ij}}$ are order preserving and extend otherwise arbitrarilyto be proper. We extend $r$ and $s$ in the following way:
\begin{eqnarray*}
r(b_{21})=\bigvee (-\infty,\overline{b_{21}}],\ldots,\ r(b_{2j_2})=\bigvee (\overline{b_{2j_2-1}},\overline{b_{2j_2}}] \\
s(c_{21})=\bigvee (-\infty,\overline{c_{21}}],\ldots, \ s(c_{2l_2})=\bigvee (\overline{c_{2l_2-1}},\overline{c_{2l_2}}].
\end{eqnarray*} 

We proceed in the same manner to define $<_D$ on all atoms of $D$ and $r,s$ on all atoms of $A,B$ respectively.
\end{proof}

The last remaining ingredient we need to show is that all the classes in Definition \ref{classes} satisfy the Ramsey property. We first introduce some notation.

Let $\na(\B)$ be one of $\na(B_J), \na(B_u), \na(B_J^u).$ Let $(X,<_X),(Y,<_Y)\in \na(\B)$ with atoms $x_1,x_2,\ldots, x_n$ and $y_1,y_2,\ldots, y_m$ respectively. Denote by
$$A=X \ast Y$$ 
the algebra with  atoms $x_1,\ldots,x_n, y_1,\ldots, y_m.$ The natural order $<_A$ on $A$ is given by $x_i<_A y_j$ for every $(i,j)\in\{1,2,\ldots,n\}\times\{1,2,\ldots,m\}$ and $x_i<_A x_j$ if and only if $x_i<_X x_j$ and $y_i<_Ay_j$ if and only if $y_i<_Y y_j.$ The interpretation of $P_j$'s in $A$ are given by their interpretation in $X$ and $Y$: $x_i\in P_j^A$ if and only if $x_i\in P_j^X$ and $y_i\in P_j^A$ if and only if $y_i\in P_j^Y.$ We will only perform this operation when it provides a well defined algbebra, it means belonging to the naturally ordered class under consideration.

Let $(A,<)\in \na(\B)$  and let $j_0$ be the $J$-minimal element such that $P_{j_0}^A\neq \emptyset.$ Denote by $A_0$ the algebra generated by the atoms of $A$ that lie in $P_{j_0}^A$  and denote by $A_1$ the algebra generated by the remaining atoms. Suppose that $X$ is a subalgebra of $A_0$ generated by $n$ atoms $x_1< x_2<\ldots < x_n$ and $Y$ is a subalgebra of $A_1$ generated by $m$ atoms $y_1 <y_2 <\ldots < y_m$ and $n>m.$   
We define
$$X\circ Y$$ 
to be the subalgebra of $A$ with atoms $b_1, b_2,\ldots, b_n$ where $b_i=x_i$ if $i=1,2,\ldots,n-m$ and $b_i=x_i\vee y_{i-(n-m)}$ if $i>n-m.$

If $X$ is a naturally ordered finite Boolean algebra without a predicate and $j\in J,$ define $$X^{j}$$ to be the algebra $X$ with all of its elements in the predicate $P_{j}.$ Notice that $X^j$ is not a structure in $\na(\B)$ for any choice of $\B.$
 
If $X\in \na(\B),$ then $X_r$ denotes the reduct of $X$ to the language of naturally ordered Boolean algebras (without predicates). 

\begin{thm}\label{RP1}
$\na(\B_J)$ satisfies the Ramsey property.
\end{thm}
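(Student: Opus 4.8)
The plan is to reduce the Ramsey property for $\na(\B_J)$ to the Dual Ramsey Theorem (Theorem \ref{GR}) by an induction on the number of ideals appearing in the structures, using the decomposition operations $\ast$, $\circ$, and $(-)^j$ introduced just before the statement. Given $(A,<)\le (B,<)$ in $\na(\B_J)$ and a number of colours $k\ge 2$, I want to produce $(C,<)\in\na(\B_J)$ with $C\to(B)^A_k$. The guiding idea is that a structure in $\na(\B_J)$ is stratified by the ideals $P_{j_0}\subset P_{j_1}\subset\cdots$ into consecutive "layers" of atoms (those entering at each level, plus the top layer of atoms outside all $P_j$), and that a copy of $A$ inside $C$ is determined by choosing, layer by layer, a partition of the atoms of $C$ in a way that respects the proper ordering. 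Because the natural order is order-forgetful and antilexicographic, a copy of $A$ is essentially a tuple of "coloured partitions" of the relevant layers, which is exactly the kind of object the Dual Ramsey Theorem controls.

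\textbf{First I would set up the base and the induction.} The base case is a single nonempty layer, i.e. structures whose atoms outside all $P_j$ carry no further ideal structure; here a copy of $A$ in $B$ corresponds to a rigid surjection / partition of the atoms of $B$ onto those of $A$, and the Dual Ramsey Theorem applied to the reducts $A_r\le B_r$ delivers a finite Boolean algebra $C_r$ with $C_r\to(B_r)^{A_r}_k$. I would then record that a proper ordering of atoms refines the assignment of atoms to ideal-levels, so that the ordering is recoverable from the layer data — this is what makes $\na(\B_J)$ order-forgetful and lets me lift the unordered Dual Ramsey conclusion back to the ordered, ideal-equipped setting. For the inductive step, I isolate the minimal ideal $P_{j_0}$ with $P_{j_0}^A\neq\emptyset$, split $A=A_0\ast A_1'$ roughly as in the definition of $A_0,A_1$, apply the induction hypothesis to the part living above $P_{j_0}$ (fewer occupied ideals) and the Dual Ramsey Theorem to the bottom layer $A_0$, and then reassemble the two Ramsey objects via the operations $\ast$ and $\circ$, using $X^{j_0}$ to tag the bottom layer into $P_{j_0}$.

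\textbf{The key steps, in order, are:} (i) formalize the bijection between copies of $A$ in $C$ and tuples of rigid surjections between the corresponding layers, checking it is colour-faithful; (ii) handle the bottom layer by the Dual Ramsey Theorem; (iii) handle the remaining layers by the induction hypothesis on $\na(\B_{J'})$ for the tail $J'$; (iv) amalgamate the two colourings by a standard product/iteration argument (colour a copy of the top part by the induced colouring it puts on the bottom copies, apply the bottom Ramsey object to stabilize, then apply the top Ramsey object), which is the usual way one chains two Ramsey statements; and (v) verify that the assembled $C$, built from $\ast$ and $\circ$ and the tagging $X^{j_0}$, actually lands in $\na(\B_J)$ — in particular that it retains at least one atom outside every $P_j$, as required in Definition \ref{classes}(1), and that the resulting order is natural. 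Throughout I would keep the number of atoms in the bottom layer larger than in the layers above so that the operation $\circ$ is defined.

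\textbf{The hard part will be} step (iv) combined with the bookkeeping in (i): making the product construction genuinely colour the right objects. The subtlety is that a copy of $A$ in $C$ does not split as an independent choice in each layer — the antilexicographic order couples the layers through which atoms of an upper layer sit above which atoms of the lower layer — so I must argue that once the bottom monochromatic copy $B_0'$ is fixed, the induced colouring on copies of the upper part is well defined and that the top Ramsey object yields a copy of $B$ whose \emph{every} sub-copy of $A$ is monochromatic, not merely those compatible with one fixed bottom. Managing this coupling, while simultaneously guaranteeing that the final amalgam satisfies the "at least one free atom" constraint that was imposed precisely to secure amalgamation, is where the real work of the proof lies; the $\ast/\circ$ formalism is designed exactly to make this coupling explicit and thereby tractable.
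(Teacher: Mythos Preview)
Your plan is essentially the paper's proof: induct on $|M|$ where $M=\{j\in J:P_j^B\neq\emptyset\}$, use the Dual Ramsey Theorem for the bottom layer and the induction hypothesis for the rest, and glue the two via a product colouring exactly as you describe in step~(iv) (for each copy $D$ of $A_1$ in $C_1$ run Dual Ramsey on $C_0$ to get a pair $(B_D,k_D)$, then colour $D$ by that pair and apply the induction hypothesis). The assembly uses $C=C_0^{j_0}\ast C_1$ and the monochromatic copy is $(B_0')^{j_0}\circ B_1'$, just as you anticipate.

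One technical correction worth flagging, since it is precisely the ``hard part'' you isolate: the Dual Ramsey Theorem is applied not to the bottom layers $(A_0)_r,(B_0)_r$ but to the \emph{full reducts} $A_r,B_r$. The reason is the one you sensed but did not quite pin down: the monochromatic object $B_0'\in{C_0\choose B_r}$ must have $|\at(B)|$ atoms, not $|\at(B_0)|$, so that $(B_0')^{j_0}\circ B_1'$ has the right number of atoms to be a copy of $B$; and every copy of $A$ inside it then decomposes as $E^{j_0}\circ D$ with $E\in{B_0'\choose A_r}$ and $D\in{B_1'\choose A_1}$, which is what makes the product colouring faithful. Your instinct to ``keep the number of atoms in the bottom layer larger'' is right, but the mechanism is this choice of $A_r,B_r$ rather than padding $C_0$.
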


\begin{proof}
Let $A\leq B$ be two structures in $\na(\B_{J})$. Denote by  $M=\{j\in J:B\cap \P_{j}\neq\emptyset\}$ and let $j_0$ be the $J$-minimal element of $M.$ Notice that if $A\cap P_{j}\neq \emptyset$ then $j\in M.$ We proceed by induction on $n=|M|.$ 

If $n=0,$ then it is just the Dual Ramsey Theorem. 
 
Suppose that the statement is true for all $M\subset J$ of size $n.$ We will prove the statement for $M$ of size $n+1.$ Let $A_0$ denote the algebra generated by atoms of $A$ in $P_{j_0}$ and $A_1$ the algebra generated by the remaining atoms of $A.$ Similarly, we define $B_0$ and $B_1.$ Let $C_0$ be an algebra given by the Dual Ramsey Theorem  such that
$$
C_0\to (B_r)^{A_r}_2. 
$$ 
Let $C_1$ be given by the induction hypothesis such that
$$
C_1\to (B_1)^{A_1}_{|  {C_0\choose B}  | \times 2}.
$$

  Let $C=C_0^{\mu}\ast C_1.$ We claim that 
$$
C  \to (B)^A_2.
$$

Let $c:{C\choose B}\to \{0,1\}$ be an arbitrary colouring. For every $D\in {C_1\choose A_1}$ define a colouring 
$$c_D:{C_0\choose A_r}\to \{0,1\} \rm{\  by\ } c_D(E)=c(E^{j_0} \circ D).$$
 Using the Dual Ramsey Theorem, find a monochromatic $B_D\in {C_0\choose B_r}$ in colour $k_D.$ Now consider the colouring 
$$c':{C_1\choose A_1}\to  {C_0\choose B}   \times \{0,1\} \rm{\ given by\ } c'(D)=(B_D,k_D).$$
 By the induction hypothesis, there is a monochromatic $B'_1\in{C_1\choose B_1}$ in colour $(B_0',k_0)$ for some $B_0'\in{C_0\choose B_r}$ and $k_0$ equal to $0$ or $1.$ Let $B'=(B_0')^{j_0}\circ B_1'.$ We verify that ${B'\choose A}$ is monochromatic in colour $k_0.$ Let $A'\in{B'\choose A}.$ 
Then $A'=E^{j_0}\circ D$ for some $E\in {B'_0\choose A_r}$ and $D\in {B'_1\choose A_1}.$ It means that $c'(D)=(B_0',k_0)$ and therefore $c(A')=c_D(E)=k_0$ and we are done.
\end{proof}

\begin{thm}\label{RP2}
$\na(\B_u)$ satisfies the Ramsey property.
\end{thm}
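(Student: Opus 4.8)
The plan is to reduce the statement directly to the Dual Ramsey Theorem (Theorem \ref{GR}), with no induction needed. The point is that in $\na(\B_u)$ the predicate $P_0$ is completely determined by the underlying ordered Boolean algebra: if $A\in\na(\B_u)$ has atoms $a_1<a_2<\ldots<a_k$, then by properness the unique atom outside $P_0$ must be the $<$-largest one, namely $a_k$, while $a_1,\ldots,a_{k-1}\in P_0$. Consequently an embedding $A\to C$ in $\na(\B_u)$ is nothing but an order-embedding of the Boolean reduct $A_r$ into $C_r$ that sends $a_k$ to the $<$-top atom of its image; and, as I explain below, this last condition turns out to be automatic. Thus copies of $A$ in $C$ in $\na(\B_u)$ should coincide with copies of $A_r$ in $C_r$ as naturally ordered Boolean algebras, and the Ramsey property will transfer verbatim from Theorem \ref{GR}.

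Concretely, given $A\leq B$ in $\na(\B_u)$, I would first pass to the reducts $A_r\leq B_r$ and invoke the Dual Ramsey Theorem to obtain a finite Boolean algebra $C_r$ with $C_r\to(B_r)^{A_r}_k$. I would then turn $C_r$ into a member $C$ of $\na(\B_u)$ by declaring its $<$-maximal atom to be the distinguished atom and putting every other atom into $P_0$; this leaves exactly one atom outside $P_0$, so $C\in\na(\B_u)$, and since $B_r$ embeds into $C_r$ one checks, via the correspondence below, that $B$ embeds into $C$.

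The heart of the argument is the bijection ${D\choose A}={D_r\choose A_r}$, valid for every $D\in\na(\B_u)$ built as above, and in particular for $D=C$ and for any copy $B'$ of $B$ inside $C$. Forgetting the predicate is clearly an injection from $\na(\B_u)$-copies of $A$ into Boolean copies of $A_r$. For surjectivity I would take any subalgebra $E\leq D_r$ isomorphic to $A_r$, i.e. any partition of the atoms of $D$ into $k$ blocks; by the computation used for (HP) in the previous lemma, a block joins to an element of $P_0$ if and only if its $<$-largest constituent atom lies in $P_0$, so the only atom of $E$ outside $P_0$ is the one whose block contains the $<$-maximal atom of $D$, which in the antilexicographical order is precisely the $<$-top atom of $E$. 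Hence $E$, with its induced predicate, has exactly one atom outside $P_0$ and is therefore a copy of $A$ in $\na(\B_u)$ by order-forgetfulness. Transferring a colouring $c:{C\choose A}\to\{0,\ldots,k-1\}$ through this bijection, applying $C_r\to(B_r)^{A_r}_k$ to obtain a monochromatic $B'_r\in{C_r\choose B_r}$, and reading $B'$ back as a copy of $B$ in $C$ would then yield ${B'\choose A}$ monochromatic, since copies of $A$ in $B'$ are exactly the copies of $A_r$ in $B'_r$.

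The only genuinely delicate point will be the surjectivity half of the bijection: one must be sure that an arbitrary Boolean subalgebra automatically carries the correct $\B_u$-predicate, i.e. that the block containing the $<$-maximal atom of $D$ is always the $<$-top atom of the subalgebra. This is where the antilexicographical nature of natural orderings and the single-ideal, single-outside-atom structure of $\B_u$ do all the work, and it is exactly why $\na(\B_u)$ requires no induction on ideals, in contrast with the proof of Theorem \ref{RP1}. Handling arbitrarily many colours $k$ is immediate here, since the Dual Ramsey Theorem already supplies the full Ramsey property.
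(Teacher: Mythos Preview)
Your argument is correct. Both proofs reduce to the Dual Ramsey Theorem, but your route is more direct than the paper's. The paper takes $C=C_0^{j_0}\ast C_1$ with $C_1$ a one-atom algebra, and transfers colourings via the map $E\mapsto E^{j_0}\circ C_1$ from $\binom{C_0}{A_r}$ into $\binom{C}{A}$; this map is not onto $\binom{C}{A}$ (copies whose top block is the singleton atom of $C_1$ are missed), but it \emph{is} a bijection onto $\binom{B'}{A}$ for the monochromatic $B'=B_0^{j_0}\circ C_1$, which is all that is needed. You instead observe that in $\na(\B_u)$ the predicate $P_0$ is completely determined by the natural order (the unique atom outside $P_0$ is always the $<$-top atom), so the forgetful map gives a literal equality $\binom{D}{A}=\binom{D_r}{A_r}$ for \emph{every} $D\in\na(\B_u)$; hence no extra atom, no $\ast$, no $\circ$, and the Ramsey statement transfers verbatim from naturally ordered Boolean algebras. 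The paper's formulation is chosen to match the inductive template of Theorem~\ref{RP1} (where the $\ast$/$\circ$ machinery is genuinely needed to peel off one ideal at a time), and this theorem serves there as the base case $n=1$; your cleaner argument serves equally well as that base case, since only the conclusion $C\to(B)^A_2$ is used downstream.
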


\begin{proof}
Let $A,B\in \na(\B_u).$ It means that $A,B$ are structures in the language $L_1$ and all but one atom of both $A$ and $B$ belong to $P_0.$ Let $C_0$ be the Boolean algebra given by the Dual Ramsey Theorem for $A_r, B_r$ and $2$ colours. Let $C_1$ be the Boolean algebra with exactly one atom. We claim that $C=C_0^{j_0}\ast C_1$ satisfies 
$$
C \to (B)^A_2.
$$
Let $c:{C \choose A}\to \{0,1\}$ be an arbitrary colouring. Then $c$ induces a colouring $c':{C_0\choose A_r}\to \{0,1\}$ by $c'(E)=c(E^{j_0}\circ C_1).$ Let $B_0$ be a $c'$-monochromatic copy of $B_r$ in $C_0.$ Then $B'=B_0^{j_0}\circ C_1$ is a $c$-monochromatic copy of $B$ in $C.$ 
\end{proof}

\begin{thm}
$\na(\B_J^u)$ satisfies the Ramsey property.
\end{thm}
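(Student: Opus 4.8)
The plan is to combine the two preceding Ramsey theorems in a way that mirrors the inductive structure of Theorem~\ref{RP1}, but with the base case supplied by $\na(\B_u)$ rather than by the bare Dual Ramsey Theorem. The class $\na(\B_J^u)$ consists of naturally ordered finite Boolean algebras carrying an increasing chain of ideals $\langle P_j:j\in J\rangle$, where all but exactly one atom lie in some $P_j$. Fix $A\leq B$ in $\na(\B_J^u)$, let $M=\{j\in J:B\cap P_j\neq\emptyset\}$, and induct on $n=|M|$. The single atom outside all the $P_j$'s plays the role that the ``exactly one atom not in $P_0$'' plays in $\na(\B_u)$, so it should be carried along throughout the induction rather than treated as part of the layered ideal structure.

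For the base case $n=1$, every atom of $B$ other than the distinguished one lies in the single ideal $P_{j_0}$, so $B$ (and $A$) is essentially a structure of the form handled in Theorem~\ref{RP2}: an algebra in which all atoms but one belong to a single distinguished ideal. Thus the base case is exactly $\na(\B_u)$ up to relabelling the predicate index from $0$ to $j_0$, and I would invoke Theorem~\ref{RP2} directly. For the inductive step, I would split along the $J$-minimal index $j_0\in M$: let $A_0,B_0$ be the subalgebras generated by the atoms in $P_{j_0}$, and $A_1,B_1$ the subalgebras generated by the remaining atoms. The crucial point is that $A_1,B_1$ lie in a copy of $\na(\B_J^u)$ with ideal-support of size $n-1$ (the distinguished atom still sits in $A_1,B_1$), while $A_0,B_0$ are ordinary naturally ordered Boolean algebras without predicates, to which the Dual Ramsey Theorem applies. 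I would then set $C_0\to (B_r)^{A_r}_2$ by the Dual Ramsey Theorem and $C_1\to (B_1)^{A_1}_{|{C_0\choose B}|\times 2}$ by the induction hypothesis, form $C=C_0^{j_0}\ast C_1$, and run the same product-colouring argument as in Theorem~\ref{RP1}: color each $D\in{C_1\choose A_1}$ by the pair $(B_D,k_D)$ recording a Dual-Ramsey-monochromatic copy $B_D$ of $B_r$ in $C_0$ together with its color, extract a $C_1$-monochromatic $B_1'$, and reassemble $B'=(B_0')^{j_0}\circ B_1'$.

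The main obstacle to watch is bookkeeping around the distinguished atom under the operations $\ast$, $\circ$, and $(\cdot)^{j_0}$. I must verify that $C=C_0^{j_0}\ast C_1$ genuinely lands in $\na(\B_J^u)$, i.e. that after placing all of $C_0$ into $P_{j_0}$ and joining with $C_1$, there is still exactly one atom outside every ideal; this forces the distinguished atom to live in the $C_1$ factor, so $C_1$ must itself be a $\na(\B_J^u)$-structure with that atom, which is consistent with applying the induction hypothesis to $(B_1,A_1)$. I also need the analogue of the ``$a_i\in P_h$ iff its last atom is in $P_h$'' lemma to guarantee that the decomposition $A'=E^{j_0}\circ D$ of a copy of $A$ in $B'$ is unique and respects the predicates, exactly as in the proof of Theorem~\ref{RP1}. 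Since all of these verifications are direct transcriptions of the arguments already given for $\na(\B_J)$ and $\na(\B_u)$, the proof is essentially a merge of the two earlier proofs, and I would write it by citing Theorem~\ref{RP2} for the base case and reproducing the inductive step of Theorem~\ref{RP1} verbatim with $\na(\B_J)$ replaced by $\na(\B_J^u)$.
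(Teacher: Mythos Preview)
Your proposal is correct and follows essentially the same approach as the paper: induction on $|M|$, with the case $|M|=1$ handled by Theorem~\ref{RP2} and the inductive step copied from Theorem~\ref{RP1}, the only additional observation being that the inductively obtained $C_1$ lies in $\na(\B_J^u)$ so that $C=C_0^{j_0}\ast C_1$ does as well. The paper also records the trivial case $|M|=0$ (both $A$ and $B$ are then the two-element algebra), which you omit but which requires no work.
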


\begin{proof}
Let $A,B\in \na(\B_J^u).$ It means that $A,B$ are structures in the language $L_J$ and all but one atom of both $A$ and $B$ belong to some of the ideals $\left<P_j, j\in J\right>.$
We proceed in the same manner as in the proof of Theorem \ref{RP1} showing that we can always pick $C$ with exactly one atom not in any $P_j$ for $j\in J.$
Let $M=\{j\in J:B\cap \P_{j}\neq\emptyset\}$ and let $j_0$ be the $J$-minimal element of $M.$ We proceed by induction on $n=|M|.$

If $n=0,$ then both $A$ and $B$ are the trivial algebras, so we can take $C$ to be a trivial algebra as well to satisfy the Ramsey property.

If $n=1,$ then this is exactly Theorem \ref{RP2}.

Suppose that the theorem is true for all $M\subset J$ of size  $n$ and we shall prove the statement for $M$ of size $n+1$. This directly follows from the proof of Theorem \ref{RP1}, since we obtain $C_1$ from the induction hypothesis for a pair of algebras with at most $n$-many $P_j$'s non-empty, $C_1$ has exactly one atom not in any $P_j$ for $j\in J$ and therefore $C_1\in \B_J^u.$ Therefore also $C=C_0^{j_0}\circ C_1$ belongs to $\na(\B_J^u)$ and we are done.
\end{proof}

Now we are ready to apply the Theorem \ref{T2} of Kechris, Pestov and Todor\v{c}evi\'c and the following generalization to uncountable structures.

\begin{thm}[\cite{DB}]\label{T}
Let $\mathcal{A}$ be a locally-finite homogeneous structure.  Suppose that $\mathcal{K}$ is a Fra\"\i ss\'e order class that is an order forgetful expansion of the class of finite substructures of $\A.$
Then $\no_{\mathcal{K}}(\mathcal{A})$ is the universal minimal flow for every dense subgroup of $\aut(\mathcal{A}).$
\end{thm}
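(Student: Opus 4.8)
The plan is to reduce the statement to the Kechris--Pestov--Todor\v{c}evi\'c machinery (Theorem \ref{T2}) adapted to the uncountable setting, and then to transfer from $\aut(\A)$ to its dense subgroups. Throughout write $G=\aut(\A)$ and fix an admissible natural ordering $<^{*}\in\no_{\K}(\A)$, i.e.\ one all of whose finite restrictions lie in $\K$; such an ordering exists by a compactness argument using (JEP) and (AP) for $\K$. I identify $\no_{\K}(\A)$ with the closed subspace of $2^{\A\times\A}$ of all $\K$-admissible linear orderings, on which $G$ acts continuously, and I first claim this is exactly the orbit closure $\overline{G<^{*}}$. Indeed, because $\K$ is order forgetful, for any two admissible orderings $<,<'$ and any finite substructure $A\leq\A$ the ordered structures $(A,<|A)$ and $(A,<'|A)$ are isomorphic, and homogeneity of $\A$ together with the ordering property (which is automatic here) lets me move one restriction onto the other by an element of $G$. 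This is the standard KPT minimality argument and shows simultaneously that $\no_{\K}(\A)=\overline{G<^{*}}$ and that $\no_{\K}(\A)$ is a minimal $G$-flow.

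The heart of the argument is extreme amenability of the stabiliser $G^{*}=\{g\in G: g<^{*}=<^{*}\}=\aut(\A,<^{*})$. The plan is to run the \emph{Ramsey implies extreme amenability} argument of \cite{KPT}, which is finitary: given a continuous action of $G^{*}$ on a compact space $Z$, a point $z_{0}$, a finite open cover, and a finite substructure $A\leq\A$, one uses an instance $C\to(B)^{A}_{k}$ of the Ramsey property of $\K$ to produce a point almost fixed by the pointwise stabiliser of $A$, and compactness then yields a genuine fixed point. The two things I must check are that this goes through when $\A$, and hence $(\A,<^{*})$, is uncountable: first, that the pointwise stabilisers of finite substructures form a neighbourhood basis of the identity in $G^{*}$, which holds because $\A$ is locally finite; and second, that $G^{*}$ acts with enough transitivity on finite ordered substructures for the colouring reduction to close up, i.e.\ that $(\A,<^{*})$ is sufficiently homogeneous. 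Since no Fra\"iss\'e limit is available in the uncountable case, I would secure the latter by choosing $<^{*}$ generically and using order forgetfulness together with homogeneity of $\A$ to extend finite order isomorphisms to elements of $G^{*}$.

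Granting extreme amenability of $G^{*}$, universality is the usual KPT cone argument and needs no countability. Given any minimal $G$-flow $M$ and a point $m_{0}\in M$, extreme amenability of $G^{*}$ furnishes a $G^{*}$-fixed point $m_{1}\in M$; the map $g<^{*}\mapsto gm_{1}$ is then well defined, since the fibres of $g\mapsto g<^{*}$ are exactly the cosets of $G^{*}$, which fix $m_{1}$. It is $G$-equivariant and uniformly continuous, so it extends to a $G$-map from $\overline{G<^{*}}=\no_{\K}(\A)$ onto a subflow of $M$, which equals $M$ by minimality. Together with the minimality established in the first paragraph this identifies $\no_{\K}(\A)$ with the universal minimal flow $M(G)$.

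Finally I transfer to a dense subgroup $H\leq G$. Since $H$ is dense in $G$ and $G$ is dense in its Raikov completion $\widehat{G}$, the subgroup $H$ is dense in $\widehat{G}$ as well, so $H$, $G$ and $\widehat{G}$ share the completion $\widehat{G}$; as every flow of a topological group extends uniquely to a flow of its completion, $M(H)=M(\widehat{H})=M(\widehat{G})=M(G)=\no_{\K}(\A)$. Concretely, $\no_{\K}(\A)$ restricted to $H$ is still minimal because density gives $\overline{Hx}=\overline{Gx}$ for every $x$, and any minimal $H$-flow carries a continuous $G$-action extending the $H$-action, by uniform continuity of the action map and compactness, so it is covered by $M(G)$; restricting the covering map to $H$ completes the argument. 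The main obstacle is the extreme amenability step of the second paragraph: reproving \emph{Ramsey implies extreme amenability} without a Fra\"iss\'e limit, i.e.\ arranging that $(\A,<^{*})$ is homogeneous enough for the finitary colouring argument to run, is where the uncountable case genuinely departs from \cite{KPT}.
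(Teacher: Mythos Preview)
The paper does not contain a proof of Theorem~\ref{T}; it is quoted from the author's earlier paper \cite{DB} and used here as a black box. There is therefore no ``paper's own proof'' to compare your proposal against.

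As to the proposal itself: the global architecture (minimality of $\no_{\K}(\A)$ via order forgetfulness and homogeneity of $\A$; universality via extreme amenability of the stabiliser $G^{*}$ of an admissible ordering; transfer to dense subgroups via the shared Raikov completion) is exactly the KPT template and is sound. Your transfer step is correct: if $H$ is dense in $G$ then $H$ is dense in $\widehat{G}$, so $\widehat{H}=\widehat{G}$ and hence $M(H)=M(G)$, since every flow of a group extends to its Raikov completion.

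You are also right that the only genuinely new difficulty is the extreme amenability of $G^{*}=\aut(\A,<^{*})$ when $(\A,<^{*})$ is not a Fra\"iss\'e limit. But your proposed fix, ``choosing $<^{*}$ generically'', is not an argument: in the uncountable setting there is no Baire category to appeal to, and an arbitrary admissible ordering on a homogeneous $\A$ need not make $(\A,<^{*})$ homogeneous. Concretely, a finite order-isomorphism extends to some $g\in\aut(\A)$ by homogeneity of $\A$, but nothing forces $g$ to preserve $<^{*}$ globally, so the identification of $G^{*}/V$ with embeddings of a finite ordered substructure---which is what the finitary Ramsey argument needs---may fail. One must either \emph{construct} $<^{*}$ by a transfinite back-and-forth so that $(\A,<^{*})$ is genuinely homogeneous (order forgetfulness is exactly what makes the successor step go through), or else bypass homogeneity of the ordered structure and run the Pestov-style finite oscillation stability argument directly on the greatest ambit of $G$, using only homogeneity of $\A$ and the Ramsey property of $\K$. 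Either route works and is what \cite{DB} supplies; your sketch stops just short of it.
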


\section{Applications}

\subsection*{The Cantor set}

Assuming $J$ to be a countable linearly ordered set, we compute universal minimal flows of Fra\"iss\'e  limits of $\B_J, \B_u, \B_J^u$ and their naturally ordered analogues. Since the  Fra\"iss\'e limit $\C$ of the class of finite Boolean algebras is the countable atomless Boolean algebra, Fra\"iss\'e limits of  $\B_J, \B_u,  \B_J^u$ are countable atomless Boolean algebras with one generic ideal for each unary predicate added to the language of Boolean algebras. If $\B$ is one of $\B_J, \B_u,  \B_J^u,$ then the Fra\"iss\'e  limit of $\na(\B)$ is the same as the Fra\"iss\'e limit of $\B$ with a distinguished generic normal ordering.

In \cite{KPT}, it was shown that the universal minimal flow of the group of automorphisms of the countable atomless Boolean algebra $\C$ is the space $\no(\C)$ of normal orderings on $\C$ induced by the natural orderings of its finite subalgebras. Applying Theorem \ref{T2}, we obtain similar results for classes of Boolean algebras considered in this paper.

\begin{thm}
Let $J$ be a countable linear order and let $\B$ be on of the classes $\B_J, \B_u,  \B_J^u.$ Let $\C_{\B}$ and $\C_{\B}^<$ be the Fra\"iss\'e limits of $\B$ and $\na(\B)$ respectively. Then
\begin{itemize}

\item[(a)] the universal minimal flow of the group of automorphisms of $\C_{\B}$ is $\no_{\B}(\C_{\B}),$

\item[(b)] the group of automorphisms of $\C_{\B}^<$ is extremely amenable.
\end{itemize}

\end{thm}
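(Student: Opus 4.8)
The plan is to verify the hypotheses of Theorem \ref{T2} for the pair $\K=\na(\B)$ and $\K_0=\B$ in the language $L=L_\B\cup\{<\}$ with $L_0=L_\B$, and then to read off both conclusions. The Fra\"iss\'e limit of $\na(\B)$ is $\C_\B^<$ and its reduct to $L_0$ is $\C_\B$, so that $G=\aut(\C_\B^<)$ and $G_0=\aut(\C_\B)$ in the notation of Theorem \ref{T2}. I would first collect the structural facts already established: since $J$ is countable, $\na(\B)$ is a countable class containing algebras of every finite cardinality and, by the Lemma, it satisfies (HP), (JEP) and (AP), so it is a Fra\"iss\'e class, with $<$ interpreted as a linear order since every natural ordering is one. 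The Ramsey property for $\na(\B)$ is exactly Theorem \ref{RP1}, Theorem \ref{RP2}, or its $\B_J^u$-analogue, according to which of the three classes $\B$ is. Finally, since $\na(\B)$ is order forgetful, it satisfies the ordering property.

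The remaining hypothesis to check is that $\na(\B)$ is \emph{reasonable}, i.e.\ that every natural ordering of $A\in\B$ extends to a natural ordering of any $B\in\B$ containing $A$ as a subalgebra. Here I would argue directly with atoms. Writing each atom $a_i$ of $A$ as the join of the set $S_i$ of atoms of $B$ below it, the $S_i$ partition the atoms of $B$, and the restriction of any antilexicographic order on $B$ compares $a_i,a_{i'}$ by the $B$-larger of $\max S_i$ and $\max S_{i'}$; hence the restricted order on the atoms of $A$ is determined solely by the relative position of the block maxima $\max S_i$. Given the proper atom order $a_1<^A\cdots<^A a_k$ underlying $<^A$, I would build a proper order on the atoms of $B$ by grouping them according to the least ideal in the chain containing them (ideal-classes increasing, atoms in no ideal placed last), which guarantees properness, and then, using the freedom available within each ideal-class, arrange the maxima so that $\max S_1<^B\cdots<^B\max S_k$. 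This is possible because the classes of $a_1,\dots,a_k$ are already nondecreasing and agree with the classes of the respective $\max S_i$. The antilexicographic extension of this atom order then restricts to $<^A$, giving reasonableness; the same construction keeps the unique non-ideal atom last, so it remains within $\B_u$ or $\B_J^u$ when relevant. As a by-product this confirms that the reduct of $\C_\B^<$ is indeed $\C_\B$.

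With all hypotheses in place, Theorem \ref{T2} immediately yields that $G=\aut(\C_\B^<)$ is extremely amenable, which is (b), and that $X_\K=\overline{G<^{\mathbf F}}$ is the universal minimal flow of $G_0=\aut(\C_\B)$. To finish (a) I would identify $X_\K$ with $\no_\B(\C_\B)$: the generic natural ordering $<^{\mathbf F}$ of $\C_\B^<$ is a normal ordering, and the closure of its $G$-orbit consists exactly of the orderings of $\C_\B$ induced by natural orderings of the finite subalgebras, that is of $\no_\B(\C_\B)$, precisely as in the unadorned Boolean case of \cite{KPT}.

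I expect the reasonableness verification to be the only genuine obstacle, since it is the one place where the interaction between the antilexicographic orderings and the chain of ideals must be handled explicitly; everything else is a matter of quoting the Lemma, the Ramsey theorems, order forgetfulness, and Theorem \ref{T2}.
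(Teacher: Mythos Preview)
Your proposal is correct and follows the same route as the paper: both obtain the theorem as a direct application of Theorem~\ref{T2} once the Fra\"iss\'e, Ramsey, and ordering hypotheses are in place. The paper in fact gives no proof beyond the sentence ``Applying Theorem~\ref{T2}, we obtain similar results\ldots'', so your explicit verification of reasonableness (which the paper silently assumes) and your identification of $X_{\K}$ with $\no_\B(\C_\B)$ are more detailed than what the paper provides, but entirely in its spirit.
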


We know that the Stone space $\E$ of the countable atomless Boolean algebra $\C$ is the Cantor set. The universal minimal flow of the group of homeomorphisms of the Cantor set was first computed by Glasner and Weiss in \cite{GW}. They identified the universal minimal flow with the space of maximal chains of closed subsets introduced by Uspenskij in \cite{	U1}:  Let $X$ be a compact space and denote by $\exp X$ the space of closed subsets of $X$ equipped with the Vietoris topology. Then the space $\Phi(X)$ of all maximal chains of closed subsets of $X$ is a closed subspace of $\exp \exp X.$  

Since the universal minimal flow is unique, the space of maximal chains $\Phi(\E)$ and the space of natural orderings $\no(\C)$ must be isomorphic. An explicit isomorphism was given in \cite{KPT}. We cite the theorem with its proof.

\begin{thm}[\cite{KPT}]\label{T3}
Let $G$ be the group of homeomorphisms of the Cantor set $\E.$ There exists an (explicit) $G$-isomorphism $\phi:\Phi(\E)\to \no(\C).$
\end{thm}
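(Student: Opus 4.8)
The plan is to use Stone duality to identify $G=\homeo(\E)$ with $\aut(\C)$ and each $c\in\C$ with the clopen set $c^*=\{u\in\ult(\C):c\in u\}$, under which $g\in\aut(\C)$ sends $c^*$ to $(gc)^*$, and then to exhibit $\phi$ together with an explicit inverse. I would first build the map $\psi\colon\no(\C)\to\Phi(\E)$ in the easy direction and take $\phi=\psi^{-1}$. Given a normal ordering $<\in\no(\C)$, for each finite subalgebra $A\le\C$ with atoms listed in increasing order $a_1<a_2<\cdots<a_k$, form the finite chain of clopen sets $\emptyset=K_0^A\subset K_1^A\subset\cdots\subset K_k^A=\E$ with $K_i^A=(a_1\vee\cdots\vee a_i)^*$, the Stone dual of the $i$-th initial segment of the atom ordering. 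Since $<$ restricts to the antilexicographic natural ordering on every subalgebra, and restrictions of natural orderings to subalgebras are again natural (as in the hereditarity argument above and in \cite{KPT}), whenever $A\le B$ each $A$-initial segment is also a $B$-initial segment; hence $\{K_i^A:i\}\subseteq\{K_j^B:j\}$, and $\mathcal{K}_<=\bigcup_{A\le\C}\{K_i^A:i\}$ is a chain of clopen subsets of $\E$. I would set $\psi(<)=\overline{\mathcal{K}_<}$, the closure taken in $\exp\exp\E$ with the Vietoris topology.

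Next I would check that $\psi(<)$ really belongs to $\Phi(\E)$, i.e. that it is a \emph{maximal} chain. That it is a closed chain containing $\emptyset$ and $\E$ is immediate; the substantive point is maximality. Here I would exploit that $\C$ is atomless: every clopen ``gap'' of $\mathcal{K}_<$ between two consecutive initial segments is subdivided by the initial segments of finer subalgebras, so any two $\subseteq$-consecutive members of $\overline{\mathcal{K}_<}$ differ by a single point and no closed set can be inserted strictly between them, which is precisely maximality of a closed chain.

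To describe $\phi=\psi^{-1}\colon\Phi(\E)\to\no(\C)$ explicitly, given a maximal chain $\mathcal{M}$ and a finite subalgebra $A$ with atoms $a_1,\dots,a_k$ partitioning $\E$ into clopen pieces $a_1^*,\dots,a_k^*$, I would order the atoms by the order in which the sweep through $\mathcal{M}$ absorbs these pieces, declaring $a_i\prec a_j$ when some $K\in\mathcal{M}$ satisfies $a_i^*\subseteq K\not\supseteq a_j^*$; maximality of $\mathcal{M}$ forces this to be a total order of the atoms, and compatibility across subalgebras assembles these into a single normal ordering $\phi(\mathcal{M})\in\no(\C)$. I would then verify $\phi\circ\psi=\mathrm{id}$ and $\psi\circ\phi=\mathrm{id}$ by computing on atoms of finite subalgebras, which suffices because normal orderings and maximal chains are each determined by their finite approximations.

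Finally, continuity of $\phi$ (equivalently of $\psi$) follows because a basic Vietoris neighbourhood of $\psi(<)$ is determined by finitely many clopen sets, hence by the restriction of $<$ to a single finite subalgebra, so $\psi$ pulls back basic open sets to the basic open sets $(A,\prec)$ of $\no(\C)$; as both $\Phi(\E)$ and $\no(\C)$ are compact Hausdorff and $\psi$ is a continuous bijection, it is a homeomorphism, and so is $\phi$. Equivariance is then formal: $g\in\aut(\C)$ sends the initial segment $(a_1\vee\cdots\vee a_i)^*$ of $<$ to $(ga_1\vee\cdots\vee ga_i)^*=g[(a_1\vee\cdots\vee a_i)^*]$, which is exactly the corresponding initial segment of $g<$ (since $ga_i\,(g<)\,ga_j\iff a_i<a_j$), and this commutes with Vietoris closure, giving $\psi(g<)=g\cdot\psi(<)$ and hence $\phi(g\cdot\mathcal{M})=g\,\phi(\mathcal{M})$. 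I expect the main obstacle to be the maximality argument together with the verification that a maximal chain yields a consistent, tie-free family of atom orderings assembling into a normal ordering; the rest is bookkeeping with Stone duality and the antilexicographic structure.
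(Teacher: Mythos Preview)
Your construction of $\psi$ contains a genuine error: the collection $\mathcal{K}_<$ is \emph{not} a chain. The hereditarity of natural orderings does \emph{not} imply that $A$-initial segments are $B$-initial segments when $A\le B$. Concretely, take $B$ with atoms $b_1<b_2<b_3$ and the subalgebra $A=\{0,\,b_2,\,b_1\vee b_3,\,1\}$. The antilexicographic order on $B$ restricts to $A$ with atom order $a_2:=b_2<a_1:=b_1\vee b_3$ (since $b_1\vee b_3$ contains the largest atom $b_3$). Then $K_1^A=b_2^{\,*}$ while $K_1^B=b_1^{\,*}$, and these two clopen sets are disjoint, hence incomparable. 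Both lie in $\mathcal{K}_<$, so $\mathcal{K}_<$ is not a chain, and neither is its Vietoris closure. Thus $\psi(<)\notin\Phi(\E)$ and the rest of the argument, which takes $\phi=\psi^{-1}$, collapses.

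The paper avoids this by working in the opposite direction from the outset and never constructing $\psi$. Given a maximal chain $\M$, it assigns to each clopen set $a$ the ``first hitting level''
\[
M_a=\bigcap\{M\in\M:M\cap a^{*}\neq\emptyset\},
\]
uses maximality to see that $M_a\cap a^{*}$ is a singleton and that disjoint clopen sets have distinct $M_a$'s, and then declares $a<_{\M}b$ iff $M_a\supset M_b$ for atoms $a,b$ of any finite subalgebra. This is the crucial device: the quantity $M_a$ depends only on $a$, not on the ambient subalgebra, so coherence across subalgebras is automatic and one obtains a single normal ordering $<_{\M}$. Note that this ``first hit'' convention is \emph{opposite} to your ``absorption'' convention ($a\prec b$ when some $K\supseteq a^{*}$ with $K\not\supseteq b^{*}$); both are sensible, but the paper's choice makes the coherence verification immediate. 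If you want to salvage your approach, you should define $\phi$ directly via one of these conventions and prove bijectivity and continuity without passing through the flawed $\psi$; alternatively, one can build the inverse chain from $<$ as the set of closed sets $F$ with the property that for every finite subalgebra $A$ the clopen hull of $F$ in $A$ is an initial segment --- but this is not what your union-of-initial-segments construction produces.
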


\begin{proof}
Given a maximal chain $\M$ of closed subsets of $\E,$ for every clopen subset $C$ of $\E,$ let 
$$
M_A=\bigcap \{M\in \M:M\cap C\neq\emptyset\}.
$$
By the maximality of $\M,$ $M_A\cap C$ is a single point for every clopen set $C.$ If $C\subset D$ are two clopen subsets of $\E,$ then $M_C\supset M_D,$ though they can also be equal. However, if $C$ and $D$ are disjoint, then $M_C$  and $M_D$ are different. If $A$ is a finite subalgebra of $\C$ and $a,b$ are two atoms of $A$ then set $a<_{\M}b$ if and only if $M_a\supset M_b.$ This gives a total ordering  of atoms  of $A$ which in turn induces an antilexicographical ordering on $A.$ These orderings cohere and produce a total order $<_{\M}$ of $\C.$ Then $\phi(\M)=<_{\M}$ is the sought for isomorphism from $\Phi(\E)$ to $\no(\C).$
\end{proof}

If $\I$ is an ideal on $\C$ and $\F$ is the dual filter to $\I$ (i.e. $\F=\{\neg a:a\in \I\}$), then $\{e\in E:\F\subset e\}$ is a closed subset of $\E.$ Therefore if $\{P_j^{C_{\B}}:j\in J\}$ are the generic ideal on $C_\B$ for $\B=\B_J,$ then each filter $\F_j$ dual to $\P_j^{C_{\B}}$ corresponds to a closed subset $E_j$ of $\E.$ If $i<j\in J,$ then $P_i^{C_{\B}}\subset P_j^{C_{\B}},$ so  also $\F_i\subset \F_j$ and consequently $E_i\supset E_j$ (larger filter determines a smaller closed subset). So we have that the increasing chain $\{P_j^{C_{\B}}:j\in J\}$ of generic ideals on $\C_{\B}$ corresponds to a decreasing chain $\{E_j:j\in J\}$ of closed subsets of the Cantor set $\E.$

\begin{thm}\label{T4}
Let $\{E_j:j\in J\}$ be a decreasing chain of closed subsets of $\E$ as above Then the group of homeomorphisms of the Cantor set fixing $E_j$ for every $j\in J$ is the space of chains of maximal closed subsets of $\E$ containing $\{E_j:j\in J\}.$
\end{thm}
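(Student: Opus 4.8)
The plan is to transport the universal minimal flow from the Boolean-algebra side to the Stone (topological) side via Stone duality, and then to recognize the relevant space of normal orderings as a space of maximal chains by cutting down the isomorphism of Theorem \ref{T3}. Throughout write $\B=\B_J$, let $\C_{\B}$ be its Fra\"iss\'e limit, and set $H=\aut(\C_{\B})$; the goal is to compute $M(H)$ and identify it with the space of maximal chains of closed subsets of $\E$ containing $\{E_j:j\in J\}$. Since every homeomorphism of the Cantor set $\E$ induces an automorphism of its clopen algebra $\C$ and conversely, and since the generic ideal $P_j^{\C_{\B}}$ is carried by Stone duality to the closed set $E_j$ (indeed, for an atom $a$ of a finite subalgebra one checks $a\in P_j^{\C_{\B}}$ iff the corresponding clopen set $a^*$ is disjoint from $E_j$, using $E_j=\E\setminus\bigcup_{c\in P_j^{\C_{\B}}}c^*$ and compactness of $a^*$), an automorphism of $\C_{\B}$ is exactly a homeomorphism of $\E$ fixing each $E_j$ setwise. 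Thus $H$ is topologically isomorphic to $\homeo(\E,\{E_j:j\in J\})$, and it suffices to compute $M(H)$.

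First I would invoke the combinatorial input already established: by Theorem \ref{RP1} the class $\na(\B_J)$ has the Ramsey property, and it is order forgetful, hence satisfies the ordering property. Since $J$ is countable, $\na(\B_J)$ is a Fra\"iss\'e order class whose reduct is the class of finite Boolean algebras with the ideal structure, and its limit is $\C_{\B}$ with a generic natural ordering, so $H$ is a Polish group to which Theorem \ref{T2} applies directly. That theorem yields $M(H)=\no_{\B}(\C_{\B})$, the space of natural (normal) orderings of $\C_{\B}$.

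It remains to identify $\no_{\B}(\C_{\B})$ with the subspace $\Phi_{\{E_j\}}(\E)\subseteq\Phi(\E)$ of those maximal chains of closed subsets that contain every $E_j$. The key step is to show that the isomorphism $\phi\colon\Phi(\E)\to\no(\C)$ of Theorem \ref{T3} maps $\Phi_{\{E_j\}}(\E)$ into $\no_{\B}(\C_{\B})$. So I would fix $\M\in\Phi_{\{E_j\}}(\E)$ and two atoms $a,b$ of a finite subalgebra with $a\in P_j^{\C_{\B}}$ and $b\notin P_j^{\C_{\B}}$, i.e. $a^*\cap E_j=\emptyset$ and $b^*\cap E_j\neq\emptyset$. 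Because $E_j\in\M$ meets $b^*$, we get $M_b\subseteq E_j$; because no member of the chain $\M$ lying inside $E_j$ can meet the clopen set $a^*$, every member of $\M$ meeting $a^*$ strictly contains $E_j$, whence $M_a\supsetneq E_j\supseteq M_b$ and therefore $a<_{\M}b$. This is precisely the defining condition of a proper ordering of atoms, so $\phi(\M)=\,<_{\M}$, being the antilexicographical extension of a proper ordering, is a natural ordering; that is, $\phi(\M)\in\no_{\B}(\C_{\B})$.

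Finally I would close by a minimality argument. The subspace $\Phi_{\{E_j\}}(\E)$ is nonempty (the chain $\{E_j:j\in J\}$ extends to a maximal one), closed in the compact space $\Phi(\E)$, and $H$-invariant since $H$ fixes each $E_j$; hence it is a compact $H$-flow. The restriction of $\phi$ is a continuous injection, and it is $H$-equivariant because $\phi$ is $\homeo(\E)$-equivariant by Theorem \ref{T3} and $H\subseteq\homeo(\E)$; by the previous paragraph its image lies in $\no_{\B}(\C_{\B})=M(H)$. That image is therefore a nonempty closed $H$-invariant subset of the minimal flow $M(H)$, so it is all of $M(H)$, and a continuous bijection of compact Hausdorff spaces is a homeomorphism. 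Hence $\phi$ restricts to an isomorphism of $H$-flows $\Phi_{\{E_j\}}(\E)\cong\no_{\B}(\C_{\B})=M(H)$, as claimed. The main obstacle is the third step: getting the dictionary between the ideal $P_j$ and the closed set $E_j$ exactly right (the translation $a\in P_j^{\C_{\B}}\Leftrightarrow a^*\cap E_j=\emptyset$ and the resulting chain of inclusions among $M_a$, $E_j$, $M_b$), since a reversed inclusion there would destroy the match between ``chain through $E_j$'' and ``proper ordering.''
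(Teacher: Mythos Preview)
Your argument is correct, and the forward inclusion $\phi\big(\Phi_{\{E_j\}}(\E)\big)\subseteq\no_{\B}(\C_{\B})$ is exactly what the paper checks first. The difference lies in how the reverse inclusion is obtained. The paper argues directly: given a maximal chain $\M$ that does \emph{not} extend $\{E_j:j\in J\}$, it produces an $M\in\M$ and an index $i$ with $E_i\not\subset M\not\subset E_i$, then builds disjoint elements $a'\in P_i$ and $b\notin P_i$ with $M\cap a'\neq\emptyset=M\cap b$, forcing $b<_{\M}a'$ and showing $<_{\M}$ is not proper on the algebra generated by $a',b,\neg(a'\vee b)$. Thus $\phi^{-1}\big(\no_{\B}(\C_{\B})\big)=\Phi_{\{E_j\}}(\E)$ on the nose. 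You instead bypass this converse computation by a soft minimality argument: $\phi\big(\Phi_{\{E_j\}}(\E)\big)$ is a nonempty closed $H$-invariant subset of the minimal flow $\no_{\B}(\C_{\B})$, hence all of it. Your route is shorter and leans on the abstract dynamics already in hand; the paper's route is more explicit and yields the additional concrete information that \emph{every} maximal chain missing some $E_j$ gives a non-natural ordering, independently of any minimality statement.
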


\begin{proof}
Let $\phi:\Phi(\E)\to \no_{\B_J}(\C_{\B_J})$ be the isomorphism given in Theorem \ref{T3}.
Let $A$ be a finite substructure of $\C_{\B_J}.$ We first show that if $\M$ is a maximal chain of closed subsets of $\E$ that extends $\{E_j:j\in J\},$ then $\phi(\M)=<_{\M}$ is a normal ordering on $\C_{\B_J}.$ Let $A$ be a finite subalgebra of $\C_{\B_J}.$ Let $a,b$ be atoms of $A$ and suppose that $a\in P_i$ and $b\in P_j\setminus P_i$ for some $i<j.$ Thinking of $a$ and $b$ as clopen subsets of $\E,$ we have that $E_i\subset \neg a,$ so $a\cap E_i=\emptyset.$ However, $b\notin P_i$ and therefore $b\subset E_i.$ It follows that $M_a\cap E_i=\emptyset$ while $M_b\cap E_i\neq\emptyset$ and hence $M_a\supset M_b$ showing that $a<_{\M}b.$ If $a\in P_i$ and $b\notin P_j$ for any $j\in J,$ then the same argument shows that $a<_{\M}b.$ It means that $<_{\M}$ is proper on atoms of any finite subalgebra and therefore $<_{\M}\in\no_{\B_J}(\C_{\B_J}).$

It remains to show that if $\M$ is a maximal chain that does not extend $\{E_j:j\in J\},$ then $<_{\M}$ is not natural on some finite substructure $A$ of $C_{\B_J}.$  If $S$ is a closed subset of $\E,$ let $\F_S$ denote the filter on $\C_{\B_J}$ determining $S.$ Since $\M$ does not extend $\{E_j:j\in J\},$ there exists $M\in\M$ and $i\in J$ such that $E_i\not\subset M\not\subset E_i.$ Let $x\in M\setminus E_i, y\in E_i\setminus M.$  Since $x\notin E_i,$ there is $a\in \F_x$ that is not compatible with $\F_{E_i}.$ In order for that to happen, it must hold that $a\in P_i.$ Thinking of $a$ as a clopen subset of $\E,$ we have that $a\cap M\neq\emptyset.$ Similarly, since $y\notin M,$ there is $b\in \F_y$ not compatible with $\F_M.$  However, $b$ is compatible with $\F_{E_i}$ and therefore $b\notin P_i.$ Thinking of $b$ as a clopen subset of $\E,$ it follows that $b\cap M=\emptyset.$ Let $a'=a\wedge \neg b\in x.$ Then $a'$ is compatible with $\F_M,$ $a'\in P_i$ and $a'\wedge b=0.$ Recall that $M_a'=\bigcap\{N\in \M:N\cap a\neq \emptyset\}$ and $M_b=\bigcap \{N\in \M:N\cap b\neq \emptyset\}.$ As $M\cap a'\neq\emptyset=M\cap b,$ $M_a'\subset M_b,$  so $b<_{\M} a'.$ It contradicts that $<_{\M}$ is proper on the algebra generated by $a',b,\neg(a'\vee b)$, which implies that $<_{\M}$ is not a normal ordering on $\C_{\B_J}.$ 
\end{proof}

If $\B=\B_u,$ then the generic ideal $P_0^{\C_{\B}}$ is a prime filter and therefore the dual filter $\F_0$ is an ultrafilter. Indeed, if $a\in \C_{\B}\setminus \F_0,$ then the  $4$-element algebra $\{0,1,a\neg a\}\in \B,$ so only one atom is not in $P_0^{\C_{\B}},$ namely $\neg a.$ Hence $\neg  a\in \F_0.$ It means that the closed subset $E_0$ of $\E$ determined by $\F_0$ is exactly one point.

Since $\E$ is a homogeneous space, i.e. for every $x,y\in \E$ there exists a homeomorphism $h$ of $\E$ such that $h(x)=y,$ all groups of automorphisms of $\E$ fixing a single point are topologically isomorphic.

\begin{thm}
Let $G$ be a group of homeomorphisms of the Cantor set $\E$ fixing a point $x.$ Then the universal minimal flow of $G$ is the space of maximal chains of closed subsets of $\E$ containing $\{x\}.$
\end{thm}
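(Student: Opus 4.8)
```latex
The plan is to reduce this statement to the case of a single generic ideal already handled in Theorem \ref{T4} by exploiting the homogeneity of $\E$ together with the identification of the relevant ideal as a prime filter. First I would observe that the preceding discussion establishes that when $\B=\B_u$, the generic ideal $P_0^{\C_{\B}}$ has a dual filter $\F_0$ that is an ultrafilter, and hence the closed set $E_0\subset\E$ it determines is exactly one point, say $x_0$. Thus the class $\na(\B_u)$ is precisely the instance of $\na(\B_J)$ with $|J|=1$ in which the single closed set in the chain is a singleton. Since Theorem \ref{RP2} gives the Ramsey property for $\na(\B_u)$ and the earlier lemma gives (HP), (JEP), (AP), the hypotheses of Theorem \ref{T2} are met, so the universal minimal flow of $\aut(\C_{\B_u})$ is $\no_{\B_u}(\C_{\B_u})$.

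Next I would apply Theorem \ref{T4} directly to the degenerate chain $\{E_0\}=\{\{x_0\}\}$: it identifies the universal minimal flow of the group of homeomorphisms of $\E$ fixing the closed set $\{x_0\}$ with the space of maximal chains of closed subsets of $\E$ that contain $\{x_0\}$. Here one must note that a homeomorphism of $\E$ fixes the point $x_0$ if and only if it fixes the closed set $\{x_0\}$ setwise, so the group in question is literally the stabilizer of the point $x_0$. This gives the result for the specific fixed point $x_0$ arising from the generic construction.

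Finally I would remove the dependence on the particular point $x_0$ by invoking the homogeneity of $\E$, which is recalled in the sentence immediately preceding the statement. Given any point $x\in\E$, there is a homeomorphism $h$ of $\E$ with $h(x_0)=x$; conjugation by $h$ furnishes a topological isomorphism between the stabilizer $G_{x_0}$ and the stabilizer $G_x$, and $h$ simultaneously carries the space of maximal chains containing $\{x_0\}$ onto the space of maximal chains containing $\{x\}$ in a manner respecting the two actions. Since the universal minimal flow is an isomorphism invariant of the topological group, transporting along $h$ yields that $M(G_x)$ is the space of maximal chains of closed subsets of $\E$ containing $\{x\}$, as claimed.

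I expect the only genuine subtlety to lie in making the first reduction airtight: one must verify carefully that the \emph{natural} (proper antilexicographic) orderings for $\na(\B_u)$, in which the unique atom outside $P_0$ is forced to sit last, match exactly the orderings that Theorem \ref{T4}'s isomorphism $\phi$ produces from maximal chains extending the singleton $\{x_0\}$. Everything else is a formal application of Theorem \ref{T2}, Theorem \ref{T4}, and the transport-of-structure argument via $h$; the potential pitfall is merely confirming that ``fixing the point $x$'' and ``fixing the closed set $E_0$'' describe the same group, which is immediate once $E_0$ is known to be a singleton.
```
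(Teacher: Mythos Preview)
Your reduction to Theorem~\ref{T4} does not go through as stated. The classes $\B_u$ and $\B_J$ with $|J|=1$ are \emph{not} the same: by Definition~\ref{classes}, an algebra in $\B_J$ is only required to have \emph{at least} one atom outside every $P_j$, whereas an algebra in $\B_u$ must have \emph{exactly} one atom outside $P_0$. Hence the Fra\"iss\'e limit $\C_{\B_{\{0\}}}$ carries a generic ideal that is not prime, and the corresponding closed set $E_0\subset\E$ is not a singleton. Theorem~\ref{T4} is stated only for the chains $\{E_j:j\in J\}$ arising ``as above'' from $\C_{\B_J}$, so it simply does not cover the case $E_0=\{x_0\}$; invoking it for a point is a misapplication of its hypothesis.

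What remains once that shortcut is removed is exactly the verification you relegate to the final ``subtlety'' paragraph: showing that the isomorphism $\phi:\Phi(\E)\to\no(\C)$ of Theorem~\ref{T3} carries precisely the maximal chains containing $\{x_0\}$ onto $\no_{\B_u}(\C_{\B_u})$. The paper does not get this from Theorem~\ref{T4} either; it reproves the two directions of Theorem~\ref{T4}'s argument with the $\B_u$-specific constraint (the unique non-$P_0$ atom must be $<_{\M}$-largest, which happens exactly when $\bigcap\M=\{x_0\}$). Your homogeneity step transporting from $x_0$ to an arbitrary $x$ is fine and matches the paper, but the core identification cannot be obtained by citation --- it is the content of the proof, not a side check.
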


\begin{proof}
The proof goes along the same lines as the proof of Theorem \ref{T4}.

Let $\B=\B^u$ and let $x$ be the point given by the ultrafilter $\F_x$ dual to the generic ideal $P^{\C_{\B}}_0.$ As we noted above, it is enough to prove the theorem for this choice of $x$ and by homogeneity of $\E$ it follows for all $x\in \E.$

Let again $\phi:\Phi(\E)\to \no_{\B}(C_{\B})$ be the isomorphism as in Theorem \ref{T3}. Suppose first that $\M$ is a maximal chain of closed subsets of $\E$ extending $\{x\}.$ Notice that $\bigcap \M=\{x\}$ by maximality of $\M.$ Let $A$ be a non-trivial finite substructure of $\C_{\B}$ and let $a,b$ be atoms of $A$ such that $a\in P_0^{\C_{\B}}$ and $b\notin P_0^{\C_{\B}}.$ Then $b\in \F_x$ and therefore $M_b=\bigcap\{M\in \M:b\cap M\neq\emptyset\}=\{x\}.$ Since $a\notin \F_x,$ $M_a$ must be larger than $M_b$ and therefore $a<_{\M} b,$ which shows that $<_{\M}$ on atoms of $A$ is proper.

Conversely, let $\M\in\Phi(X)$ not extend $\{x\}.$ Then $\bigcap \M=\{y\}\in\M$ for some $y\neq x.$ Let $a\in \F_x$ such that $\neg a\in \F_y.$  Then $a\notin P_0^{\C_{\B}}$ while $\neg a\in P_0^{\C_{\B}}$ and $M_a\subset M_{\neg a}.$ That implies $\neg a<_{\M}a$ and therefore $<_{\M}$ is not proper on atoms of the algebra $\{0,1,a,\neg a\}.$ 
\end{proof}

Let $\B=\B_J^u$ and let $\{P_j^{C_{\B}}:j\in J\}$ be the generic ideals on $C_{\B}.$ For each $j\in J,$ let $\F_j$ be the filter dual to $P_j^{C_{\B}}$ and $E_j$ the closed subset of $\E$ determined by $\F_j.$ Then $\{E_j:j\in J\}$ is a decreasing chain of closed sets with an intersection a single point. Indeed, $\bigcap_{j\in J} E_j=\{e\in \E:\F=\bigcup_{j\in J} \F_j\subset e\}.$ Let $a\in \C_{\B}.$ Then the Boolean algebra $\{0,1,a,\neg a\}\in \B.$ Therefore by the definition of $\B_J^u,$ exactly one of  $a$ and $\neg a$ is in  $P_{j_0}^{C_{\B}}$ for some $j_0\in J,$ say $a.$ Then $\neg a\in \F_{j_0},$ which shows that $\F$ is an ultrafilter.   

\begin{thm}
Let $\{E_j:j\in J\}$ be a decreasing chain of closed subsets of $\E$ intersecting in a singleton as above. Then the group of homeomorphisms of the Cantor set fixing $E_j$ for every $j\in J$ is the space of chains of maximal closed subsets of $\E$ containing $\{E_j:j\in J\}.$
\end{thm}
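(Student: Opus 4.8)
The plan is to run the combined argument of Theorem \ref{T4} (the decreasing chain of closed sets) and of the theorem on homeomorphisms fixing a single point, both of which rest on the explicit $G$-isomorphism $\phi:\Phi(\E)\to\no_{\B_J^u}(\C_{\B_J^u})$ of Theorem \ref{T3}. As in those proofs it is enough to characterise, for a maximal chain $\M$ of closed subsets of $\E,$ when the induced ordering $<_{\M}=\phi(\M)$ belongs to $\no_{\B_J^u}(\C_{\B_J^u})$: the universal minimal flow is then the $\phi$-preimage of $\no_{\B_J^u}(\C_{\B_J^u}),$ which is exactly the closed set of maximal chains containing $\{E_j:j\in J\}.$ Recall from the discussion preceding the theorem that $\F=\bigcup_{j\in J}\F_j$ is an ultrafilter, so $\bigcap_{j\in J}E_j=\{x\}$ is a single point.

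First I would treat the forward implication: if $\M$ extends $\{E_j:j\in J\},$ then $<_{\M}$ is proper on every finite subalgebra $A$ of $\C_{\B_J^u}.$ Maximality of $\M$ together with $\bigcap_{j}E_j=\{x\}$ forces $\bigcap\M=\{x\}.$ For atoms $a,b$ of $A$ with $a\in P_i$ and $b\in P_j\setminus P_i$ for $i<j,$ the computation is verbatim that of Theorem \ref{T4} and yields $M_a\supset M_b,$ i.e. $a<_{\M}b.$ The genuinely new case, inherited from the singleton argument, is when $b$ is the unique atom of $A$ lying in no ideal: considering $\{0,1,b,\neg b\}\in\B_J^u$ we get $\neg b\in P_{j_0}$ for some $j_0,$ so $b\in\F_{j_0}\subset\F$ and hence $x\in b;$ since $x$ lies in every member of $\M,$ every $M\in\M$ meets $b$ and therefore $M_b=\bigcap\M=\{x\}.$ As $a\in P_i$ gives $M_a\supseteq E_i\supseteq\{x\}=M_b,$ we again obtain $a<_{\M}b.$ This verifies properness on $A,$ so $<_{\M}\in\no_{\B_J^u}(\C_{\B_J^u}).$

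For the converse I would show that if $\M$ fails to extend $\{E_j:j\in J\}$ then $<_{\M}$ is not proper. Pick $i$ with $E_i\notin\M;$ by maximality $E_i$ is incomparable with some $M\in\M,$ so $E_i\not\subset M\not\subset E_i.$ From here the construction is identical to the converse in Theorem \ref{T4}: choosing points $p\in M\setminus E_i$ and $q\in E_i\setminus M$ produces clopen sets $a\in\F_p$ with $a\in P_i$ and $b\in\F_q$ with $b\notin P_i,$ and after setting $a'=a\wedge\neg b$ one gets two atoms $a'\in P_i$ and $b\notin P_i$ of the algebra generated by $a',b,\neg(a'\vee b)$ (which lies in $\B_J^u,$ being a finite subalgebra of $\C_{\B_J^u}$) with $M\cap a'\neq\emptyset=M\cap b;$ hence $M_{a'}\subset M_b$ and $b<_{\M}a',$ contradicting properness. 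Thus $<_{\M}\notin\no_{\B_J^u}(\C_{\B_J^u}).$

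The only step beyond a transcription of the two earlier proofs is the new forward case, and the point to get exactly right there is that the hypothesis $\bigcap_{j}E_j=\{x\}$ (equivalently, that $\F$ is an ultrafilter), combined with maximality forcing $\bigcap\M=\{x\},$ pins the distinguished atom down as $<_{\M}$-largest via $M_b=\{x\}.$ I expect no further obstacle, since both the chain bookkeeping and the incompatibility construction have already been carried out in Theorems \ref{T4} and \ref{T3}.
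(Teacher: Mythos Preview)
Your proposal is correct and follows the same approach as the paper, which simply records that the proof is identical to that of Theorem \ref{T4}. The one place you diverge is in the forward direction when $b$ is the unique atom lying in no ideal: you invoke the singleton argument ($\bigcap\M=\{x\}$ and $M_b=\{x\}$), but this is unnecessary, since Theorem \ref{T4} already treats the case ``$a\in P_i$ and $b\notin P_k$ for any $k\in J$'' by the same reasoning as its first case (from $b\notin P_i$ one gets $b\cap E_i\neq\emptyset$, hence $M_b\subset E_i\subsetneq M_a$). So what you label a ``genuinely new case'' is in fact already absorbed by the Theorem \ref{T4} argument, which is why the paper can dispense with any additional text.
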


\begin{proof}
The proof is identical to the proof of Theorem \ref{T4}.
\end{proof}

\subsection*{Quotients of power set algebras $\P(\k)/[\k]^{<\l}$}

Throughout this section, let $\l\leq\k$ be two infinite cardinals. We denote by $\P(\k)/[\k]^{<\l}$ the quotient of the power set algebra on $\k$ by the ideal of sets of cardinality less than $\l.$ 

If $\k=\l,$ then $\P(\k)/[\k]^{<\l}$ is homogeneous. If $\l<\k,$ then for every $\mu\in [\l,\k]$ we let $P_{\mu}$ be the ideal on $\P(\k)/[\k]^{<\l}$ consisting of equivalence classes of subsets of $\k$ of cardinality $\leq\mu.$ Then the extended structure
$$\P^{\k}_{\l}=(\P(\k)/[\k]^{<\l}, P_{\mu}:\mu\in[\l,\k])$$
becomes a homogeneous structure. To see that let $A,B$ be finite substructures of $\P^{\k}_{\l}$ and $\phi:A\to B$ an isomorphism. Let $A_0, A_1,\ldots A_n$ be mutually disjoint subsets of $\k$ such that $\bigcup_{i=0}^n A_i=\k$ and $[A_0], [A_1], \ldots, [A_n]$ are atoms of $A.$  Similarly, pick a disjoint partition $B_0, B_1,\ldots, B_n$ of $\k$ consisting of representants of atoms of $B.$ Without loss of generality, we can assume that $\phi([A_i])=[B_i]$ for every $i.$
Then for every $i,$ the sets $A_i$ and $B_i$ have the same cardinality, so we can pick a bijection $b_i:A_i\to B_i.$ Let $b:\kappa\to \kappa$ be a bijection extending every $b_i$ and consider the induced isomorphism $b_{\sim}:\P^{\k}_{\l}\to \P^{\k}_{\l}.$ Then $b_{\sim}|A=\phi$ and $b_{\sim}$ is a total isomorphism, which shows that $\P^{\k}_{\l}$ is homogeneous.

If $\t$ is the order type of $[\l,\k+1],$ then $\B_{\t}$ is the class of all finite substructures of $\P^{\k}_{\l}$ up to an isomorphism.

Having shown that $\na(\B_{\t})$ has all the desired properties, we are ready to apply Theorem \ref{T} from \cite{DB} to compute the universal minimal flow of the group of automorphisms of $\P^{\k}_{\l}.$ 

\begin{thm}
Let $\l\leq \k$ be two infinite cardinals, then the universal minimal flow of the group of automorphisms of $\P^{\k}_{\l}$ is the space $\no(\P^{\k}_{\l}).$
\end{thm}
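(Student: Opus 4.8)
The plan is to obtain the statement as a direct application of Theorem \ref{T}, the extension of the Kechris--Pestov--Todor\v{c}evi\'c correspondence to locally-finite homogeneous structures whose age need not be countable. To invoke it I would put $\A=\P^{\k}_{\l}$ and take $\K=\na(\B_{\t})$, where $\t$ is the order type of the index set of the predicates $P_{\mu}$. The conclusion of Theorem \ref{T} then reads that $\no_{\K}(\A)=\no(\P^{\k}_{\l})$ is the universal minimal flow of every dense subgroup of $\aut(\A)$; specializing the dense subgroup to $\aut(\A)$ itself, which is trivially dense in itself, gives the theorem. So no new computation is needed beyond checking the two hypotheses of Theorem \ref{T}.

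First I would verify that $\P^{\k}_{\l}$ is a locally-finite homogeneous structure. Local finiteness is clear, since any finite subset of a Boolean algebra generates a finite subalgebra and adjoining the unary predicates $P_{\mu}$ keeps it finite. Homogeneity is exactly the partition-and-bijection argument carried out just above the statement (with the case $\k=\l$ reducing to the homogeneity of $\P(\k)/[\k]^{<\l}$). Second I would recall that $\B_{\t}$ is precisely the age of $\P^{\k}_{\l}$, as noted above, and that $\na(\B_{\t})$ is an order forgetful expansion of $\B_{\t}$: any two natural orderings of a fixed $A\in\B_{\t}$ yield isomorphic ordered algebras, since a natural ordering is the antilexicographic extension of a proper ordering of atoms and is thus determined up to isomorphism by the predicate structure. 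It then remains to note that $\na(\B_{\t})$ is a Fra\"iss\'e order class in the generalized (possibly uncountable) sense demanded by Theorem \ref{T}: it satisfies (HP), (JEP) and (AP) by the preceding Lemma, and it has the Ramsey property by Theorem \ref{RP1} applied with $J=\t$. When $\k=\l$ this index set is empty and the Ramsey property degenerates to the Dual Ramsey Theorem, so both the homogeneous case $\k=\l$ and the properly expanded case $\l<\k$ are covered uniformly.

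The genuinely delicate point is precisely this last verification. For $\l<\k$ the class $\na(\B_{\t})$ is uncountable, hence not a Fra\"iss\'e class in the original sense, and Theorem \ref{T2} does not apply directly; this is the whole reason for appealing instead to Theorem \ref{T} from \cite{DB}, which trades the countability requirement for density of the acting group. The work needed to meet its weakened hypotheses is exactly what the preceding Lemma and Theorem \ref{RP1} were designed to supply, so once they are cited the identification of the universal minimal flow with $\no(\P^{\k}_{\l})$ is immediate. I would close by observing that this is the instance of the theorem relevant to the Katowice problem, recovered by setting $\k=\omega_1$ and $\l=\omega$.
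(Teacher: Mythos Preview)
Your proposal is correct and follows exactly the route the paper takes: the paper's proof consists entirely of the sentence preceding the theorem, namely that having established homogeneity of $\P^{\k}_{\l}$, that its age is $\B_{\t}$, and that $\na(\B_{\t})$ is an order forgetful class satisfying (HP), (JEP), (AP) and the Ramsey property, one applies Theorem~\ref{T}. Your write-up simply makes the verification of these hypotheses explicit.
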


If there is no isomorphism between $\P(\omega)/\fin$ and $\P(\omega_1)/\fin,$ then the groups of automorphisms of $\P(\omega_1)/\fin$ and $\P^{\omega_1}_{\omega}$ are topologically isomorphic. So we obtain the following corollary.

\begin{cor}
Let $G$ be the group of automorphisms of $\P(\omega_1)/\fin.$ If there is no isomorphism between $\P(\omega)/\fin$ and $\P(\omega_1)/\fin,$ then the universal minimal flow of $G$ is the space $\no(\P(\omega_1)/\fin).$
\end{cor}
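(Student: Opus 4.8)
The plan is to obtain the corollary directly from the preceding theorem (the case $\l=\omega,\ \k=\omega_1$), once we know that the hypothesis forces the two automorphism groups to coincide. Write $\B$ for $\P(\omega_1)/\fin.$ Unravelling the definition of $\P^{\omega_1}_{\omega},$ its underlying Boolean algebra is exactly $\B,$ and the added unary predicates are the ideals $P_{\mu}$ of classes of sets of cardinality $\leq\mu$ for $\mu\in[\omega,\omega_1];$ of these only $P_{\omega}$ (the ideal of countable classes) is non-trivial, since $P_{\omega_1}$ is all of $\B.$ Every automorphism of $\P^{\omega_1}_{\omega}$ is in particular an automorphism of its reduct $\B,$ so the main point is the converse: that every $\phi\in\aut(\B)$ automatically respects $P_{\omega}.$

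The hard part, and the only place the hypothesis is used, is this preservation claim. I would argue via relative algebras. For $[A]\in\B,$ the relative algebra $\B|[A]=\{[C]:[C]\leq[A]\}$ is isomorphic to $\P(A)/\fin,$ hence to $\P(\omega)/\fin$ when $A$ is countable and to $\P(\omega_1)/\fin$ when $|A|=\omega_1$ (these being the only two possibilities for a subset of $\omega_1,$ and an infinite class having a well-defined representative cardinality). An automorphism $\phi$ of $\B$ maps $\B|[A]$ isomorphically onto $\B|\phi([A]).$ Thus if $[A]\in P_{\omega}$ (i.e.\ $A$ countable) while $\phi([A])\notin P_{\omega}$ (i.e.\ $\phi([A])$ has uncountable, hence size-$\omega_1,$ representatives), we would obtain $\P(\omega)/\fin\cong\B|[A]\cong\B|\phi([A])\cong\P(\omega_1)/\fin,$ contradicting the assumption that $\P(\omega)/\fin$ and $\P(\omega_1)/\fin$ are not isomorphic. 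Hence $\phi$ maps $P_{\omega}$ into itself; applying the same to $\phi^{-1}$ shows that $\phi$ preserves $P_{\omega}$ exactly, so $\phi\in\aut(\P^{\omega_1}_{\omega}).$

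It then remains to check that $\aut(\B)$ and $\aut(\P^{\omega_1}_{\omega})$ agree as topological groups. They have the same underlying set by the previous paragraph, and in both cases the topology of pointwise convergence is generated by the pointwise stabilizers $G_A$ of finite subalgebras $A$: a finite substructure of $\P^{\omega_1}_{\omega}$ is just a finite subalgebra of $\B$ carrying the induced (hence determined) predicates, so the two groups have literally the same basic neighbourhoods of the identity. Therefore the identity map is a topological isomorphism, and the preceding theorem applied with $\l=\omega,\ \k=\omega_1$ identifies the universal minimal flow of $\aut(\B)$ with $\no(\P^{\omega_1}_{\omega}),$ which is the space denoted $\no(\P(\omega_1)/\fin)$ in the statement. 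The main obstacle is the relative-algebra step, that is, ruling out that an automorphism of $\B$ could carry a countable class to an uncountable one; everything else is bookkeeping.
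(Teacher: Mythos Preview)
Your argument is correct and follows exactly the route the paper intends: the paper simply asserts, in the sentence preceding the corollary, that under the hypothesis the automorphism groups of $\P(\omega_1)/\fin$ and of $\P^{\omega_1}_{\omega}$ are topologically isomorphic, and then invokes the preceding theorem. You have supplied the justification the paper omits, namely the relative-algebra argument showing that an automorphism of $\P(\omega_1)/\fin$ sending a countable class to an uncountable one would yield $\P(\omega)/\fin\cong\P(\omega_1)/\fin$; the remaining observations (that $P_{\omega_1}$ is trivial, that the pointwise-convergence topologies coincide, and that $\no(\P^{\omega_1}_{\omega})$ is what the statement calls $\no(\P(\omega_1)/\fin)$) are exactly the bookkeeping the paper leaves to the reader.
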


If $\P(\omega)/\fin$ and $\P(\omega_1)/\fin,$ then the universal minimal flows of their groups of automorphisms are isomorphic. The universal minimal flow of the group of homeomorphisms of $\omega^*$ (the Stone space of $\P(\omega)/\fin$) was first computed by Glasner and Gutman in \cite{GG}. We reproved the result in the language of Boolean algebras in \cite{DB}.

\begin{thm}\cite{GG}
If $\P(\omega)/\fin$ and $\P(\omega_1)/\fin$ are isomorphic, then the universal minimal flow of the group of automorphisms of $\P(\omega_1)/\fin$ is the space $\no(\P(\omega)/\fin).$
\end{thm}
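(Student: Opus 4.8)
The plan is to transport the problem across the hypothesized isomorphism and reduce it to the computation of the universal minimal flow of $\aut(\P(\omega)/\fin),$ which we have already carried out (it is the case $\k=\l=\omega$ of the computation of $\no(\P^{\k}_{\l})$ above, and was reproved in \cite{DB}). First I would fix a Boolean algebra isomorphism $\psi\colon\P(\omega)/\fin\to\P(\omega_1)/\fin,$ furnished by the assumption. Conjugation $c_\psi\colon g\mapsto\psi\circ g\circ\psi^{-1}$ is an abstract group isomorphism from $\aut(\P(\omega)/\fin)$ onto $\aut(\P(\omega_1)/\fin).$ The essential point is that $c_\psi$ is moreover a homeomorphism for the topologies of pointwise convergence on the two groups. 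Indeed, a basis of open subgroups is given by the pointwise stabilizers $G_A$ of finite subalgebras $A,$ and since $\psi$ carries finite subalgebras of $\P(\omega)/\fin$ bijectively onto finite subalgebras of $\P(\omega_1)/\fin,$ we have $c_\psi(G_A)=G_{\psi(A)};$ thus $c_\psi$ maps basic open subgroups onto basic open subgroups and is a topological isomorphism.

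Because the universal minimal flow is an invariant of a topological group up to isomorphism of flows, it then suffices to identify the universal minimal flow of $\aut(\P(\omega)/\fin).$ For this I would apply Theorem \ref{T} with $\A=\P(\omega)/\fin.$ The algebra $\P(\omega)/\fin$ is locally-finite and homogeneous, and its class of finite substructures is precisely the class of finite Boolean algebras (no distinguished ideals occur, since $\P(\omega)/\fin$ is homogeneous). The class $\na$ of naturally ordered finite Boolean algebras is an order forgetful expansion of this class, so it satisfies the ordering property, and by the Dual Ramsey Theorem (Theorem \ref{GR}) together with Proposition 5.6 of \cite{KPT} it has the Ramsey property; hence it is a Fra\"iss\'e order class as required. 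Theorem \ref{T} then gives that $\no(\P(\omega)/\fin)$ is the universal minimal flow of $\aut(\P(\omega)/\fin).$

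Combining the two steps, $c_\psi$ transports this flow to $\aut(\P(\omega_1)/\fin),$ so the universal minimal flow of $\aut(\P(\omega_1)/\fin)$ is (isomorphic to) $\no(\P(\omega)/\fin),$ recovering the result of Glasner and Gutman \cite{GG}. The step I expect to be the only real obstacle is the verification inside the first paragraph: the hypothesis provides merely an \emph{abstract} Boolean algebra isomorphism, and one must argue that it respects the pointwise-convergence topology---that is, that it matches the distinguished bases of open subgroups---so that the induced group isomorphism is a homeomorphism and therefore preserves the universal minimal flow. Everything else is an application of results already established.
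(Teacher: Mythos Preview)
Your proposal is correct and follows exactly the line of reasoning the paper indicates: the paper does not give a formal proof of this theorem (it is cited from \cite{GG}), but the preceding sentence explains that an isomorphism between the two algebras yields isomorphic automorphism groups and hence isomorphic universal minimal flows, reducing to the known computation for $\P(\omega)/\fin$. Your write-up simply makes this explicit, supplying the verification that conjugation by $\psi$ is a topological (not merely abstract) group isomorphism---which is indeed the only point requiring care, and your argument via the basic open subgroups $G_A$ handles it correctly.
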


Following a paper by van Douwen \cite{vD}, we introduce two dense subgroups of the group of automorphisms of $\P^{\k}_{\l}:$ $S_{\k,\l}^*$ and $T_{\k,\l}^*.$

Denote by $T_{\kappa,\l}$ the set of all bijections between subsets $A,B\subset\kappa$ with $|\kappa\setminus A|, |\kappa\setminus B|<\l.$ With the operation of composition, $T_{\kappa,\l}$ is a monoid, but not a group. We can however assign to each $f\in T_{\kappa,\l}$ an automorphism $f^*$ of $\mathcal{P}_{\l}^\k,$ $f^*([X])=[f[X]],$ mapping $T_{\kappa,\l}$ onto a subgroup $T_{\kappa,\l}^*=\{f^*:f\in T_{\kappa,\l}\}$ of the group of automorphisms of $\mathcal{P}^{\k}_{\l}.$ Since the automorphisms in $T_{\kappa,\l}^*$ are induced by a pointwise bijection between subsets of $\kappa,$ we call them \emph{trivial}. Inside of $T_{\kappa,\l}^*$ we have a normal subgroup of those automorphisms induced by a true permutation of $\kappa,$ let us denote it by  $S_{\kappa,\l}^*.$ Shelah \cite{S1} (see also \cite{SS}) proved that consistently every automorphism of $\mathcal{P}(\omega)/{\rm fin}$ is trivial. This has been extended to $\mathcal{P}(\kappa)/{\rm fin}$ for all cardinals $\kappa$ in \cite{V}. Of course, consistently the two groups are different. 

By density of $S_{\kappa,\l}^*$ and $T_{\kappa,\l}^*$ in the group of automorphisms of $\P^{\k}_{\l},$ we can apply Theorem \ref{T} to prove the following.

\begin{cor}
The universal minimal flow of the groups $S_{\k,\l}^*$ and $T_{\k,\l}^*$ is the space $\no(\P^{\k}_{\l}).$
\end{cor}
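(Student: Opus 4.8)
The plan is to reduce the statement to Theorem \ref{T} from \cite{DB}, exactly as the preceding corollary reduced the universal minimal flow computation for $\aut(\P^{\k}_{\l})$ itself. The key structural fact I would need is that $\P^{\k}_{\l}$ is a locally-finite homogeneous structure (established earlier in this section by the explicit bijection-extension argument) whose class of finite substructures is $\B_{\t}$, and that $\na(\B_{\t})$ is a Fra\"iss\'e order class which is an order forgetful expansion of $\B_{\t}$ (established by the preceding lemma and Ramsey-property theorems). Granting these, Theorem \ref{T} applies to every dense subgroup of $\aut(\P^{\k}_{\l})$ and yields $\no_{\B_{\t}}(\P^{\k}_{\l})$ as the universal minimal flow.

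The heart of the argument is therefore to verify that $S_{\k,\l}^*$ and $T_{\k,\l}^*$ are dense in $\aut(\P^{\k}_{\l})$ in the topology of pointwise convergence. Recall that a basic neighbourhood of a given automorphism $\psi$ is determined by a finite subalgebra $A$ of $\P^{\k}_{\l}$, and density means that for each such $A$ there is a trivial automorphism (respectively one induced by a true permutation) agreeing with $\psi$ on $A$. First I would fix a finite subalgebra $A$ and a disjoint partition $A_0,\dots,A_n$ of $\k$ whose classes $[A_i]$ are the atoms of $A$; the image atoms $\psi([A_i])$ are likewise represented by a disjoint partition $B_0,\dots,B_n$ of $\k$. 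The crucial observation is that $\psi$, being an automorphism of the expanded structure, preserves the predicates $P_\mu$, so $[A_i]$ and $\psi([A_i])=[B_i]$ lie in exactly the same ideals $P_\mu$; this forces $A_i$ and $B_i$ to have the same cardinality. Hence I can choose bijections $b_i\colon A_i\to B_i$ and amalgamate them into a single bijection $b\colon\k\to\k$, whose induced trivial automorphism $b^*$ satisfies $b^*|A=\psi|A$. This is precisely the same computation used to prove homogeneity of $\P^{\k}_{\l}$, so density of $T_{\k,\l}^*$ reduces to reusing that argument locally rather than globally.

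For $S_{\k,\l}^*$ the same construction works because the $A_i$ (and $B_i$) partition all of $\k$, so the $b_i$ already assemble into a genuine permutation of $\k$ rather than merely a bijection between co-small subsets; thus the witnessing automorphism lies in $S_{\k,\l}^*$, giving density of the smaller group as well. Once density is in hand, I would invoke Theorem \ref{T} to conclude that $\no(\P^{\k}_{\l})$ is the universal minimal flow of each of $S_{\k,\l}^*$ and $T_{\k,\l}^*$, completing the proof.

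The main obstacle I anticipate is not any deep combinatorics but the cardinal-arithmetic bookkeeping: I must be certain that membership of an atom in the predicates $P_\mu$ pins down its representative's cardinality precisely enough that $|A_i|=|B_i|$ for every $i$. Since $\t$ is the order type of $[\l,\k+1]$ and the predicates $P_\mu$ index cardinalities $\mu\in[\l,\k]$, preservation of all these unary predicates by $\psi$ determines the cardinality of each atom up to the granularity recorded by the ideals, and for atoms of a finite subalgebra partitioning $\k$ this granularity is exactly fine enough to match cardinalities. I would want to state this matching cleanly, noting that it is the same principle already exploited in the homogeneity proof, so that the density verification is genuinely routine given the earlier work.
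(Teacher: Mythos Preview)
Your proposal is correct and follows exactly the paper's approach: the paper simply asserts that $S_{\k,\l}^*$ and $T_{\k,\l}^*$ are dense in $\aut(\P^{\k}_{\l})$ (citing van Douwen) and then invokes Theorem~\ref{T}, with no further argument. You do the same, additionally spelling out the density verification via the homogeneity-style bijection argument, which is a harmless elaboration rather than a different route.
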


\textbf{Acknowledgement.} I would like to thank Stevo Todor\v{c}evi\'c for his kind support and advice while pursuing this work.

\bibliographystyle{alpha}
\bibliography{Linear}

\end{document}